\newtheorem{thm}{Theorem}[section]
\newtheorem{lem}[thm]{Lemma}
\newtheorem{prop}[thm]{Proposition}
\theoremstyle{definition}
\newtheorem{dfn}[thm]{Definition}
\theoremstyle{remark}
\newtheorem{remark}[thm]{Remark}
\newtheorem{notation}[thm]{Notation}
\newcommand{\CA}{{\mathcal{A}}}
\newcommand{\CE}{{\mathcal{E}}}
\newcommand{\CL}{{\mathcal{L}}}
\newcommand{\CB}{{\mathcal{B}}}
\newcommand{\af}{\alpha}
\newcommand{\bt}{\beta}
\newcommand{\gm}{\gamma}
\newcommand{\dt}{\delta}
\newcommand{\ep}{\epsilon}
\newcommand{\ld}{\lambda}
\newcommand{\sm}{\sigma}
\newcommand{\C}{{\mathbb{C}}}
\begin{document}


\title[Simple labeled graph $C^*$-algebras and disagreeable labeled spaces]
{Simple labeled graph $C^*$-algebras are associated to disagreeable labeled spaces}

\author[J. A. Jeong]{Ja A Jeong$^{\dagger}$}
\thanks{Research partially supported by NRF-2015R1C1A2A01052516$^{\dagger}$}
\thanks{Research partially supported by Hanshin University$^{\ddagger}$}
\address{
Department of Mathematical Sciences and Research Institute of Mathematics\\
Seoul National University\\
Seoul, 08826\\
Korea} \email{jajeong\-@\-snu.\-ac.\-kr }

\author[G. H. Park]{Gi Hyun Park$^{\ddagger}$}
\address{
Department of Financial Mathematics\\
Hanshin University\\
Osan, 18101\\
Korea} \email{ghpark\-@\-hs.\-ac.\-kr }

\subjclass[2000]{46L05, 46L55}

\keywords{labeled graph $C^*$-algebra,  simple $C^*$-algebra}

\subjclass[2010]{46L05, 46L55}

\begin{abstract} 
By a labeled graph $C^*$-algebra  we mean 
a $C^*$-algebra 
associated to a labeled space $(E,\mathcal L,\mathcal E)$ consisting of 
a labeled graph $(E,\mathcal L)$ and the smallest normal accommodating set 
$\mathcal E$ of vertex subsets.
Every graph $C^*$-algebra  $C^*(E)$ is a labeled graph $C^*$-algebra 
and it is well known that  $C^*(E)$ is simple 
if and only if the graph $E$ is cofinal and satisfies Condition (L). 
Bates and Pask extend these conditions of graphs $E$ to 
labeled spaces, and 
show  that if a set-finite and receiver set-finite 
labeled space $(E,\mathcal L, \mathcal E)$ is cofinal and disagreeable, 
then its $C^*$-algebra $C^*(E,\mathcal L, \mathcal E)$ is  simple. 
In this paper, we show that the converse is also true. 
\end{abstract}

\maketitle

\setcounter{equation}{0}

\section{Introduction}
  
A class of $C^*$-algebras associated to 
directed graphs including the Cuntz-Krieger algebras \cite{CK} 
was introduced in \cite{KPR, KPRR}, and 
since then its generalizations 
have attracted much attention of many authors. 
The $C^*$-algebras associated to ultragraphs, infinite matrices, 
higher-rank graphs, subshifts, 
Boolean dynamical systems, and labeled spaces are  
examples of the generalizations 
(see \cite{ BCP, BP1, BP2, Ca, DT, EL, Ma97, To1} among many others). 

 One of the main topics dealt with in the study of 
these generalized Cuntz-Krieger algebras is to describe 
the ideal structure of a $C^*$-algebra in question in terms of 
structural properties 
of the object to which the $C^*$-algebra  is associated.  
The ideal structure of a graph $C^*$-algebra is now well understood, and 
if we recall it for a row-finite graph $E$ with no singular vertices, 
it says that 
there exists a one to one 
correspondence between the gauge-invariant ideals of 
the  graph $C^*$-algebra $C^*(E)$ and 
the hereditary saturated vertex subsets of the graph $E$ 
(\cite{BPRS, BHRS, DT}), and 
moreover 
$C^*(E)$ is simple if and only if $E$ is cofinal and satisfies 
Condition (L) (\cite{DT,KPR}). 
Here the gauge action is the action of the unit circle on 
a graph $C^*$-algebra which always exists 
because of the universal property of a graph $C^*$-algebra. 
Many authors put a great deal of effort to extend this result 
to the classes of generalized Cuntz-Krieger algebras, and  
in this paper we will look at the labeled graph $C^*$-algebras 
and focus on the question of when these algebras are simple.   

If $(E,\CL)$ is a labeled graph, that is, 
$\CL:E^1\to \CA$ is a labeling map of the edges $E^1$ 
onto an alphabet $\CA$, then as we will review in 
the next section, one can consider 
a collection $\CB$ consisting of certain vertex subsets so that 
a universal family of projections $\{p_A: A\in \CB\}$ and 
partial isometries $\{s_a: a\in \CA\}$ 
satisfying the relations imposed by the triple $(E,\CL,\CB)$ 
exists and thus one can form the $C^*$-algebra $C^*(E,\CL,\CB)$ 
generated by this universal 
family of operators $\{p_A,s_a\}$. 
We call $C^*(E,\CL,\CB)$ the $C^*$-algebra of a labeled space $(E,\CL,\CB)$. 
Particularly, if $\CE$  is the smallest normal accommodating set,
we will simply call $C^*(E,\CL,\CE)$ 
the labeled graph  $C^*$-algebra of $(E,\CL)$ 
for convenience.  
In this paper, we will be mostly interested in 
these labeled graph $C^*$-algebras  $C^*(E,\CL,\CE)$. 

Every graph $C^*$-algebra is a labeled graph $C^*$-algebra 
(\cite[Example 5.1]{BCP}) and the class of Morita equivalence 
classes of  $C^*$-algebras of labeled spaces strictly 
contains the class of Morita equivalence  classes of graph 
$C^*$-algebras (see \cite[Remark 5.2]{BCP} and \cite[Theorem 3.7]{JKKP}).
By the universal property of a labeled graph 
$C^*$-algebra $C^*(E,\CL,\CB)$, there exists 
a gauge action of the unit circle 
on $C^*(E,\CL,\CB)$, and it is known \cite{JKiP} that 
if $E$ has no sinks and 
$(E,\CL,\CB)$ is a set-finite and receiver set-finite normal labeled space, 
there is a one to one correspondence between 
the gauge-invariant ideals of  $C^*(E,\CL,\CB)$ 
and the hereditary saturated  subsets of $\CB$. 
(A gauge invariant uniqueness theorem \cite[Theorem 5.3]{BP1} 
used in \cite{JKiP} was turned out to be incorrect, but 
was corrected in \cite[Theorem 2.7]{BPW} for 
normal labeled spaces and in \cite{BCP} for general labeled spaces.) 

Bates and Pask  \cite{BP2} 
considered the question of when 
a $C^*$-algebra $C^*(E,\CL,\CE)$ 
of a set-finite and receiver set-finite labeled space 
$(E,\CL,\CE)$ is simple, and proved that  
$C^*(E,\CL,\CE)$ is simple if $(E,\CL,\CE)$ is cofinal and disagreeable.
The notion of a disagreeable labeled space  $(E,\CL,\CE)$ 
introduced in \cite{BP2} is an analogue 
of Condition (L) of usual directed graphs. 
The  cofinal condition
for $(E,\CL,\CE)$ used in \cite{BP2} 
needs to be modified to obtain 
the simplicity result for $C^*(E,\CL,\CE)$  
as  noted in \cite[Remark 3.15]{JK} where
a condition called strongly cofinal was used instead.
The definition of a strongly cofinal labeled space given in 
\cite{JK} is weaker than the one in this paper 
(see Definition~\ref{strongly cofinal} or \cite[Section 2.5]{JKaP}), 
and throughout the present paper 
we mean  Definition~\ref{strongly cofinal} 
if we mention strong cofinality.
It then follows from \cite[Theorem 3.16]{JK} 
that $C^*(E,\CL,\CE)$ is simple whenever 
$(E,\CL,\CE)$ is disagreeable and strongly cofinal.

As for the converse of Bates and Pask's simplicity result, 
the strong cofinality of $(E,\CL,\CE)$ can be derived as in 
\cite[Theorem 3.8]{JK} by slightly modifying the proof there 
(see Theorem~\ref{thm-stcofinal}).
On the other hand, 
it is not clear whether the labeled space 
 $(E,\CL,\CE)$ has to be disagreeable when its $C^*$-algebra 
 $C^*(E,\CL,\CE)$ is simple, and 
 this is the question we will consider in this paper.
From a recent result \cite[Theorem 9,16]{COP} 
on simplicity of 
a $C^*$-algebra associated to a Boolean dynamical system, 
 we know that for a labeled space 
 $(E,\CL,\CE)$ whose Boolean dynamical system 
 satisfies a sort of domain condition, 
 the $C^*$-algebra $C^*(E,\CL,\CE)$ is simple if and only if 
$(E,\CL,\CE)$ has no cycles without an exit and there are no 
nonempty hereditary saturated subsets of $\CE$. 
The first condition of having no cycles without an exit is 
always satisfied whenever the labeled space is disagreeable 
while the converse does not hold in general 
(Proposition~\ref{prop_disagreeable}), 
and the second condition of having no nonempty 
saturated hereditary subsets is 
equivalent to the absence of 
gauge-invariant proper ideals in $C^*(E,\CL,\CE)$. 
Thus the question of  whether 
the converse of Bates and Pask's simplicity result 
holds true is not answered directly from 
\cite{COP} while it is known \cite[Theorem 3.14]{JK} 
that the converse holds if $\CE$ contains 
$\{v\}$ for every vertex $v$ in $E$.

The purpose of the present paper is, as mentioned above, 
to figure out whether the converse of Bates and Pask's simplicity result  
holds and it is proved in Theorem~\ref{main} that 
the labeled space $(E,\CL,\CE)$ is always disagreeable 
if $C^*(E,\CL,\CE)$ is simple.  
This establishes the following:  
a labeled graph $C^*$-algebra $C^*(E,\CL,\CE)$ is 
simple if and only if $(E,\CL,\CE)$ is strongly cofinal and 
disagreeable.  

\vskip 1pc 

\section{Preliminaries} 
In this section we set up notation and review 
definitions and basic results we need in the next section. 
For more details, we refer the reader to \cite{BCP} or \cite{JKaP}.

A  {\it directed graph} $E=(E^0,E^1,r,s)$
consists of  the vertex set $E^0$ and the edge set $E^1$ 
together with the range, source maps $r$, $s: E^1\to E^0$. 
We call  a vertex $v\in E^0$ a {\it sink} 
(a {\it source}, respectively)
if $s^{-1}(v)=\emptyset$  ($r^{-1}(v)=\emptyset$, respectively).
If every vertex in $E$ emits only finitely many edges, 
$E$ is called {\it row-finite}.  

For each $n\geq 1$, 
$E^n$  denotes the set of all  paths of length $n$, and 
the vertices in $E^0$ are regarded as finite paths of length zero. 
The maps $r,s$ naturally extend to the set 
$E^*=\cup_{n \geq 0} E^n$ of all finite paths, 
especially with $r(v)=s(v)=v$ for $v \in E^0$. 
By $E^{\infty}$ we denote the set of all infinite paths 
$x=\ld_{1}\ld_{2}\cdots$, where we define $s(x):= s(\ld_1)$. 
For $A,B\subset E^0$ and $n\geq 0$, we use the following notation 
 $$ AE^n: =\{\ld\in E^n :  s(\ld)\in A\},\ \
  E^nB: =\{\ld\in E^n : r(\ld)\in B\},$$
 and  $AE^nB: =AE^n\cap E^nB$ with  
$E^n v:=E^n\{v\}$, $vE^n:=\{v\}E^n$.
Also the sets of paths 
like $E^{\geq k}$, $AE^{\geq k}$, and $AE^\infty$ which 
have their obvious meaning will be used.  
A  {\it loop} is a finite path $\ld\in E^{\geq 1}$ 
such that $r(\ld)=s(\ld)$, and 
an {\it exit} of a loop $\ld$ is a path 
$\dt\in E^{\geq 1}$ such that  
$|\dt|\leq |\ld|,\ s(\dt)=s(\ld), \text{ and } 
\dt\neq \ld_1\cdots \ld_{|\dt|}.$ 
A graph $E$ is said to satisfy {\it Condition} (L)  
if every loop has an exit. 

Let $\CA$ be a countable alphabet and let  
$\CA^*$ ($\CA^\infty$, respectively) denote   
the set of all finite words (infinite words, respectively)
in symbols of $\CA$. 
A {\it labeled graph} $(E,\CL)$ over $\CA$ consists of 
a directed graph $E$ and  a {\it labeling map} 
$\CL:E^1\to \CA$ which is always assumed to be onto. 
Given a graph $E$, one can define a so-called 
{\it trivial labeling} map 
 $\CL_{id}:=id:E^1\to E^1$ which is the identity map 
 on $E^1$ with the alphabet $E^1$. 
The labeling map naturally extends to any 
finite and infinite labeled paths, namely 
if $\ld=\ld_1\cdots \ld_n\in E^n$, then  
 $\CL(\ld):=\CL(\ld_1)\cdots \CL(\ld_n)\in \CL(E^n)\subset \CA^*$, and  
 similarly to  infinite paths. 
 We often call these labeled paths just paths for convenience if 
 there is no risk of confusion,  
and use notation $\CL^*(E):=\CL(E^{\geq 1})$.
For a vertex $v\in E^0$ and a vertex subset $A\subset E^0$, 
we set $\CL(v): =v$ and $\CL(A): =A$, respectively. 
 A subpath  $\af_i\cdots \af_j$ of  
$\af=\af_1\af_2\cdots\af_{|\af|}\in \CL^*(E)$ 
 is denoted by
 $\af_{[i,j]}$ for $1\leq i\leq j\leq |\af|$, and 
 each $\af_{[1,j]}$, $1\leq j\leq |\af|$, is called an 
{\it initial path} of $\af$. 
 The range and source of a path $\af\in \CL^*(E)$ are defined 
to be the following sets of vertices
 \begin{align*}
r(\af) &=\{r(\ld) \in E^0 \,:\, \ld\in E^{\geq 1},\,\CL(\ld)=\af\},\\
 s(\af) &=\{s(\ld) \in E^0 \,:\, \ld\in E^{\geq 1},\, \CL(\ld)=\af\},
\end{align*}
and the {\it relative range of $\af\in \CL^*(E)$  
with respect to $A\subset  E^0$} is defined by
$$
 r(A,\af)=\{r(\ld)\,:\, \ld\in AE^{\geq 1},\ \CL(\ld)=\af \}.
$$
A collection  $\CB$ of subsets of $E^0$ is said to be
 {\it closed under relative ranges} for $(E,\CL)$ if 
$r(A,\af)\in \CB$ whenever 
 $A\in \CB$ and $\af\in \CL^*(E)$. 
We call $\CB$ an {\it accommodating set}~ for $(E,\CL)$
 if it is closed under relative ranges,
 finite intersections and unions and contains 
the ranges $r(\af)$ of all paths $\af\in \CL^*(E)$.
A set $A\in \CB$ is called {\it minimal} (in $\CB$)  
if $A \cap B$ is either $A$ or $\emptyset$ for all $B \in \CB$.
 
If $\CB$ is accommodating for $(E,\CL)$, 
the triple $(E,\CL,\CB)$ is called
 a {\it labeled space}. 
We say that a labeled space $(E,\CL,\CB)$ is {\it set-finite}
 ({\it receiver set-finite}, respectively) if 
for every $A\in \CB$ and $k\geq 1$ 
 the set  $\CL(AE^k)$ ($\CL(E^k A)$, respectively) is finite.
  A labeled space $(E,\CL,\CB)$ is said to be {\it weakly left-resolving} 
if 
 $$r(A,\af)\cap r(B,\af)=r(A\cap B,\af)$$
holds  for all $A,B\in \CB$ and  $\af\in \CL^*(E)$.
If $\CB$ is closed under relative complements, 
we call $(E,\CL, \CB)$ a {\it normal} labeled space.

\vskip 1pc 

\begin{notation}
For $A\in \CE$, we will use the following notation
$$A\sqcap \CB:=\{B\in \CE: B\subset A\}.$$ 
\end{notation}

\vskip 1pc 

\noindent 
{\bf Assumptions.}   
Throughout this paper, we assume that 
graphs $E$ have no sinks and sources,  
and labeled spaces $(E,\CL,\CB)$ are  
weakly left-resolving,  set-finite, receiver set-finite, and 
normal. 

\vskip 1pc 
 
\begin{dfn} 
\label{def-representation}
A {\it representation} of a labeled space $(E,\CL,\CB)$
is a family of projections $\{p_A\,:\, A\in \CB\}$ and
partial isometries
$\{s_a\,:\, a\in \CA\}$ such that for $A, B\in \CB$ and $a, b\in \CA$,
\begin{enumerate}
\item[(i)]  $p_{\emptyset}=0$, $p_{A\cap B}=p_Ap_B$, and
$p_{A\cup B}=p_A+p_B-p_{A\cap B}$,
\item[(ii)] $p_A s_a=s_a p_{r(A,a)}$,
\item[(iii)] $s_a^*s_a=p_{r(a)}$ and $s_a^* s_b=0$ unless $a=b$,
\item[(iv)]\label{CK4}  $p_A=\sum_{a\in \CL(AE^1)} s_a p_{r(A,a)}s_a^*.$ 
\end{enumerate}
\end{dfn}

\vskip 1pc  
\noindent
It is known \cite{BCP,BP1} that 
given a labeled space  $(E,\CL,\CB)$, 
there exists a $C^*$-algebra $C^*(E,\CL,\CB)$ generated by 
a universal representation $\{s_a,p_A\}$ of $(E,\CL,\CB)$, 
so that if $\{t_a, q_A\}$ is a representation of $(E,\CL,\CB)$ 
in a $C^*$-algebra $B$, there exists a $*$-homomorphism 
$$\phi: C^*(E,\CL,\CB)\to B$$ such that 
$\phi(s_a)=t_a$ and $\phi(p_A)=q_A$ for all $a\in \CA$ and 
$A\in \CB$. 

\vskip 1pc 
 
\begin{dfn} 
We call the $C^*$-algebra $C^*(E,\CL,\CB)$ generated by 
a universal representation of $(E,\CL,\CB)$ 
the {\it $C^*$-algebra of
a labeled space} $(E,\CL,\CB)$.
\end{dfn} 
 
\vskip 1pc
\noindent
The $C^*$-algebra $C^*(E,\CL,\CB)$ is unique up to isomorphism,  
and we simply write $$C^*(E,\CL,\CB)=C^*(s_a,p_A)$$ 
to indicate the generators $s_a, p_A$ that 
are nonzero for all $a\in \CA$ and  $A\in \CB$, $A\neq \emptyset$. 

\vskip 1pc

\begin{remark}\label{basics} 
Let $(E,\CL,\CB)$ be a labeled space with $C^*(E,\CL,\CB)=C^*(s_a,p_A)$. 
By $\ep$, we denote a symbol (not in $\CL^*(E)$)
such that  
$a\epsilon=\epsilon a$, $r(\ep)=E^0$, and $r(A,\ep)=A$ 
for all $a\in \CA$ and $A \subset E^0$. 
We write $\CL^{\#}(E)$ for the union $\CL^*(E)\cup \{\ep\}$.  
Let $s_\ep$ denote the unit of 
the multiplier algebra of $C^*(E,\CL,\CB)$.  
Then one can easily check the following 
$$(s_\af p_{A} s_\bt^*)(s_\gm p_{B} s_\dt^*)=
\left\{
   \begin{array}{ll}
      s_{\af\gm'}p_{r(A,\gm')\cap B} s_\dt^*, & \hbox{if\ } \gm=\bt\gm' \\
      s_{\af}p_{A\cap r(B,\bt')} s_{\dt\bt'}^*, & \hbox{if\ } \bt=\gm\bt'\\
      s_\af p_{A\cap B}s_\dt^*, & \hbox{if\ } \bt=\gm\\
      0, & \hbox{otherwise,}
   \end{array}
\right.
$$                    
for $\af,\bt,\gm,\dt\in  \CL^{\#}(E)$ and $A,B\in \CB$ 
(see \cite[Lemma 4.4]{BP1}). 
Since 
$s_\af p_A s_\bt^*\neq 0$ if and only if 
$A\cap r(\af)\cap r(\bt)\neq \emptyset$, 
we have
$$
C^*(E,\CL,\CB)=\overline{\rm span}\{s_\af p_A s_\bt^*\,:\,
\af,\,\bt\in  \CL^{\#}(E) ~\text{and}~ A \subseteq r(\af)\cap r(\bt)\}. 
$$
\end{remark}

\vskip 1pc

For a labeled graph $(E,\CL)$, there are many accommodating 
sets to be considered to form a labeled space, and 
the  $C^*$-algebras $C^*(E,\CL,\CB)$ 
 are not necessarily isomorphic to each other, in general.
By $\CE$  we denote the smallest accommodating set 
for which $(E,\CL, \CE)$ is a normal labeled space. 
We are mostly interested in the $C^*$-algebras of 
these labeled spaces 
$(E,\CL,\CE)$ throughout this paper.

For each $l\geq 1$,  
the relation $\sim_l$ on  $E^0$ given by 
$v\sim_l w$ if and only if 
$\CL(E^{\leq l} v)=\CL(E^{\leq l} w)$ 
is an equivalence relation, and the equivalence class 
$[v]_l$ of $v\in E^0$ is called a {\it generalized vertex} 
(or  a vertex simply).  
   If $k>l$,  then $[v]_k\subset [v]_l$ is obvious and
   $[v]_l=\cup_{i=1}^m [v_i]_{l+1}$
   for some vertices  $v_1, \dots, v_m\in [v]_l$ 
(\cite[Proposition 2.4]{BP2}). 
Moreover,  we have
$$
\CE=\Big\{ \cup_{i=1}^n [v_i]_l:\, v_i\in E^0,\  l\geq 1,\,  n\geq 0 \Big\},
$$
with the convention  $\sum_{i=1}^0 [v_i]_l:=\emptyset$ 
by \cite[Proposition 2.3]{JKK}. 

Recall that a {\it Cuntz-Krieger $E$-family} for a graph $E$ 
is a representation of the labeled space 
$(E,\CL_{id},\CE)$ with the trivial labeling, and 
{\it the Cuntz-Krieger uniqueness theorem} for graph $C^*$-algebras 
says that 
if $E$ satisfies Condition (L), 
then every Cuntz-Krieger $E$-family of nonzero 
operators generates the same $C^*$-algebra $C^*(E)$ 
up to isomorphism  
(for example, see \cite[Theorem 3.1]{BPRS}, \cite[Corollary 2.12]{DT}, 
and  \cite[Theorem 3.7]{KPR}).  
A condition of a  labeled space corresponding 
to Condition (L) of a directed graph was suggested in 
\cite{BP2} as below, and 
it is shown there in \cite[Lemma 5.3]{BP2} that 
for a graph $E$, 
the labeled space $(E,\CL_{id}, \CB)$ with the trivial labeling 
is disagreeable if and only if $E$ satisfies Condition (L).

\vskip 1pc 

\begin{dfn}\label{dfn-disagreeable} {\rm (\cite[Definition 5.2]{BP2})} 
 A path $\af \in \CL^*(E)$ with 
$s(\af)\cap [v]_l \neq \emptyset$ is 
called {\it agreeable} for $[v]_l$ if 
$\af=\bt\af'=\af'\gm$ for some $\af',\bt,\gm \in \CL^*(E)$ 
with $|\bt|=|\gm| \leq l$. 
Otherwise $\af$ is called {\it disagreeable}.
 We say that $[v]_l$ is {\it disagreeable} 
 if there is an $N\geq 1$ 
 such that for all $n > N$ there is an 
 $\af \in \CL(E^{ \geq n})$ which is disagreeable for $[v]_l$. 

A labeled space $(E,\CL, \CB)$ is said to be  {\it disagreeable}  
 if for every $v\in E^0$, there is an $L_v\geq 1$ such that 
  every $[v]_l$ is disagreeable for all $l \geq L_v$. 
\end{dfn} 
 
\vskip 1pc

It is then natural to ask whether every loop in a disagreeable labeled 
space must have an exit, which first leads us to 
try to seek a right definition for a loop in a labeled space 
and then to 
work on whether  the important results known for graph $C^*$-algebras 
$C^*(E)$ which involve the loop structure of $E$ 
can be generalized to labeled graph $C^*$-algebras. 
We take the following definition and will see  in 
the next section that 
every loop in a disagreeable labeled space has an exit.

\vskip 1pc 

\begin{dfn}\label{loop} {\rm (\cite[Definition 3.2]{JKK})}
Let $(E, \CL,  \CE)$ be a labeled space. 
For a  path $\af\in \CL^*(E)$ and  
a nonempty set $A \in  \CE$, 
we call  $(\af,A)$ a {\it loop} if  $$A\subset r(A,\af).$$  
We say that a loop  $(\af,A)$ has an {\it exit} 
if one of the following holds:
\begin{enumerate}
\item[(I)] there exists a path 
$\bt \in \CL(AE^{\geq 1})$ such that 
$|\bt|=|\af|,\ \bt \neq \af,$
\item[(II)] $A\subsetneq r(A,\af)$.
\end{enumerate} 
\end{dfn}

\vskip 1pc 

In \cite{COP}, 
the notion of cycle was introduced to define 
Condition $(L_\CB)$ for a labeled space $(E,\CL,\CB)$ 
(more generally for 
Boolean dynamical systems) which can be regarded as 
another condition 
analogous to Condition  (L) for usual directed graphs. 

\vskip 1pc 

\begin{dfn}
 (\cite[Definition 9.5]{COP}) 
For  $\af\in \CL^*(E)$ and a nonempty $A\in \CE$, 
the pair $(\af, A)$ is called a {\it cycle} if  
$$B=r(B,\af)\ \text{ for all  } B\in A\sqcap\CE.$$
Clearly every cycle is a loop, and 
if $(\af,A)$ is a cycle with an exit, then 
the exit must be of type (I). 
\end{dfn} 

\vskip 1pc

If $(E,\CL,\CE)$ is a labeled space satisfying our 
standing assumptions and if, in addition,
for each path $\af\in \CL^*(E)$, 
\begin{eqnarray} \label{domain}
r(D_\af, \af)=r(\af)\ \text{ for some } D_\af\in \CE
\end{eqnarray} 
(that is, every path has a domain in $\CE$), then 
the labeled graph $C^*$-algebra $C^*(E,\CL,\CE)$ 
can be regarded as a $C^*$-algebra associated to a 
Boolean dynamical system as discussed in 
\cite[Example 11.1]{COP}.  
A Boolean dynamical system on a 
Boolean algebra $\CB$ is said to satisfy 
Condition $(L_\CB)$ if it has no cycle  without an exit:  
if this is the case for the Boolean 
dynamical system induced from a labeled space 
$(E,\CL,\CE)$, we will simply say that 
$(E,\CL,\CE)$ satisfies Condition $(L_\CE)$.

Theorem~\ref{CK uniqueness thm} below 
is the {\it Cuntz-Krieger uniqueness theorem} 
for  labeled graph $C^*$-algebras: 
if  $(E,\CL,\CE)$ is disagreeable,   
it satisfies Condition  $(L_\CE)$ 
(see Proposition~\ref{prop_disagreeable} or 
 \cite[Proposition 3.7]{JKaP}). 
We need to understand Condition $(L_\CE)$ and 
disagreeability of labeled spaces  
not only to answer the simplicity question of 
labeled graph $C^*$-algebras, but also to be able to 
apply this useful 
uniqueness theorem for labeled graph $C^*$-algebras.

\vskip 1pc 

\begin{thm} {\rm (\cite[Theorem 5.5]{BP1}, \cite[Theorem 9.9]{COP})} 
\label{CK uniqueness thm} 
Let  $\{t_a, q_A\}$ be a representation of a labeled space $(E,\CL,\CE)$ 
such that $q_A\neq 0$ for all nonempty $A\in \CE$. 
If $(E,\CL,\CE)$ satisfies condition  $(L_\CE)$, in particular 
if  $(E,\CL,\CE)$ is disagreeable, then 
the canonical homomorphism $\phi:C^*(E,\CL,\CE)=C^*(s_a, p_A)\to 
 C^*(t_a, q_A)$ such that $\phi(s_a)=t_a$ and 
$\phi(p_A)=q_A$  is an isomorphism. 
\end{thm}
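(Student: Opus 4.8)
The plan is to prove the Cuntz-Krieger uniqueness theorem for labeled graph $C^*$-algebras by the now-standard two-step strategy used for graph $C^*$-algebras: first reduce to showing that the canonical homomorphism $\phi$ is isometric on the fixed-point algebra of the gauge action, and then upgrade this to injectivity on all of $C^*(E,\CL,\CE)$ using a conditional expectation together with Condition $(L_\CE)$. Since a quotient of $C^*(E,\CL,\CE)$ by a nonzero ideal either kills some $p_A$ or does not, and the hypothesis $q_A\neq 0$ for all nonempty $A\in\CE$ rules out the first case on the image side, it suffices to show that $\ker\phi$ contains no nonzero element; equivalently, that the ideal generated by $\ker\phi$ contains no nonzero $p_A$, and more precisely that $\phi$ is injective outright.

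First I would set up the gauge actions $\gamma$ on $A:=C^*(E,\CL,\CE)$ and $\bar\gamma$ on $B:=C^*(t_a,q_A)$, noting that $\phi$ is equivariant because it sends generators to generators; averaging over $\T$ gives faithful conditional expectations $\Phi\colon A\to A^\gamma$ and $\Psi\colon B\to B^{\bar\gamma}$ with $\Psi\circ\phi=\phi\circ\Phi$. By Remark~\ref{basics}, $A^\gamma$ is the closed span of elements $s_\af p_A s_\bt^*$ with $|\af|=|\bt|$, and these organize into an increasing union of finite-dimensional-over-$C^*(p_A)$ ``diagonal'' subalgebras; the core is an AF-like direct limit of the subalgebras spanned by $\{s_\af p_A s_\bt^*:|\af|=|\bt|=k,\ A\subseteq r(\af)\cap r(\bt)\}$. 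The key step is to show $\phi$ is injective on $A^\gamma$: one shows that any $x\in A^\gamma$ with $\phi(x)=0$ must have each of its ``matrix coefficients'' (obtained by compressing with $s_\af p_A s_\bt^*$ and using relation (iii) to collapse to a projection $p_C$) map to zero, and since $\phi(p_C)=q_C\neq 0$ for $C\neq\emptyset$, injectivity on $C^*(p_A)$ forces those coefficients to vanish. This requires knowing that $C^*(p_A)\cong$ its image, which follows because the relations (i) express $C^*(p_A)$ as a commutative algebra determined by the lattice $\CE$, and $q_\emptyset=0$, $q_C\neq 0$ otherwise, pins down the same Gelfand spectrum.

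The main obstacle — and the place where Condition $(L_\CE)$ enters — is the passage from injectivity on the core $A^\gamma$ to injectivity on $A$. Given $0\neq a\in A$, after cutting down by a suitable projection and a standard $\varepsilon$-approximation one reduces to a positive element $a$ with $\|a\|=\|\Phi(a)\|$ and $\Phi(a)$ close to a finite sum $\sum s_\af p_A s_\bt^*$ supported on paths of length $\leq k$; the task is to produce a projection $Q$ in $C^*(p_A)$ (in fact of the form $p_C$ or a ``tail'' projection $s_\mu p_C s_\mu^*$) such that $QaQ\approx Q\Phi(a)Q$ is essentially supported on the core, so that $\|\phi(a)\|\geq\|\phi(QaQ)\|=\|QaQ\|$ is bounded below. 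Finding such a $Q$ is exactly where one needs that no cycle $(\af,A)$ is without an exit: if every $p_C$ with $C\subseteq r(\af)$ were dominated by the cyclic structure of $\af$ the off-diagonal terms could not be annihilated, but Condition $(L_\CE)$ guarantees that for each $C$ one can find a path that ``breaks the loop,'' producing a subprojection that isolates the diagonal. I would model this argument on \cite[Theorem 5.5]{BP1} and \cite[Theorem 9.9]{COP}, adapting the Boolean-dynamical-systems version: the cited references already carry out exactly this reduction, so the proof amounts to assembling the equivariance, the faithful expectation, the core-injectivity, and the $(L_\CE)$-driven projection lemma, with the last being the substantive ingredient and the one I would expect to consume most of the work.
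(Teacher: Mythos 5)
The paper does not actually prove this theorem: it is imported verbatim from \cite[Theorem 5.5]{BP1} and \cite[Theorem 9.9]{COP}, so there is no in-paper argument to compare against; the following assesses your sketch on its own terms.

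The central gap is in your second paragraph, where you posit a gauge action $\bar\gamma$ on $B=C^*(t_a,q_A)$ and a faithful conditional expectation $\Psi\colon B\to B^{\bar\gamma}$ satisfying $\Psi\circ\phi=\phi\circ\Phi$. No such action is available under the stated hypotheses: the representation $\{t_a,q_A\}$ is arbitrary, and the existence of a compatible circle action on its image is precisely the extra hypothesis of the \emph{gauge-invariant} uniqueness theorem, not of the Cuntz--Krieger one. Worse, if such a faithful $\Psi$ did exist, then $\phi(a)=0$ would give $\phi(\Phi(a^*a))=\Psi(\phi(a^*a))=0$, hence $\Phi(a^*a)=0$ by core-injectivity and $a=0$ by faithfulness of $\Phi$; Condition $(L_\CE)$ would never be used, which signals that this setup proves the wrong theorem. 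The correct route, which your third paragraph gestures at, must dispense with $\Psi$ entirely: one works only with the expectation $\Phi$ on the universal algebra and uses Condition $(L_\CE)$ to manufacture, inside the \emph{image}, a projection $Q$ that kills the off-diagonal terms $\phi(s_\af p_A s_\bt^*)$ with $|\af|\neq|\bt|$ while preserving the norm of the diagonal part, yielding $\|\phi(a)\|\geq\|\Phi(a)\|$ directly. That projection lemma is the substantive content of the theorem, and your proposal defers it wholesale to \cite{BP1} and \cite{COP} rather than carrying it out. A secondary caution: the structure of the fixed-point algebra for labeled spaces is genuinely more delicate than in the graph case --- the analogous step in \cite[Theorem 5.3]{BP1} was found to be incorrect and had to be repaired in \cite{BPW} and \cite{BCP} --- so the claimed ``AF-like direct limit of diagonal subalgebras'' needs careful justification, e.g.\ by decomposing the spanning projections over minimal sets of the finite Boolean subalgebras of $\CE$ generated by the relevant relative ranges.
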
 

\vskip 1pc
   
\begin{dfn} {\rm (\cite[Definition 3.4]{JKiP})} 
A subset  $H$ of $\CE$  is {\it hereditary} if 
it is closed under subsets, finite unions, and 
relative ranges in $\CE$.
A hereditary set $H$ is {\it saturated} if  $A \in H$ 
whenever $A \in \CE $ and $r(A,\af) \in H$ 
for all $\af \in \CL^*(E)$. 
\end{dfn}
 
\vskip 1pc

Let  $\overline{\CL(E^\infty)}$ 
be the set of all  infinite sequences 
$x\in \CA^{\mathbb N}$ such that 
every finite words of $x$ 
occurs as a labeled path in $(E,\CL)$, namely 
$$\overline{\CL(E^\infty)}:=\{x\in \CA^{\mathbb N}\mid 
x_{[1,n]}\in \CL(E^n) \ \text{for all } n\geq 1\}.$$ 
Clearly $\CL(E^\infty)\subset \overline{\CL(E^\infty)}$, and 
the notation $\overline{\CL(E^\infty)}$ comes from 
the fact that 
$\overline{\CL(E^\infty)}$ is the closure of $\CL(E^\infty)$ in the 
totally disconnected perfect space $\CA^{\mathbb N}$ 
which is equipped with the topology that has  a countable 
basis of open-closed cylinder sets 
$Z(\af):=\{x\in \CA^{\mathbb N}: x_{[1,n]}=\af\}$, $\af\in \CA^n$, $n\geq 1$ 
(see Section 7.2 of \cite{Ki}). 

\vskip 1pc
   
\begin{dfn}\label{strongly cofinal} {\rm (\cite[Section 2.5]{JKaP})} 
 We say that a labeled space $(E,\CL,\CE )$ is 
{\it strongly cofinal} if for each $x\in \overline{\CL(E^\infty)}$ 
and $[v]_l\in \CE $, there exist 
an $N\geq 1$ and a finite number of paths 
$\ld_1, \dots, \ld_m\in \CL^*(E)$ such that 
$$r(x_{[1,N]})\subset \cup_{i=1}^m r([v]_l,\ld_i).$$ 
\end{dfn}

\vskip 1pc
\noindent
The above definition of a strongly cofinal 
labeled space is stronger than the one 
given in \cite{JK}: for example, in the following labeled space 

\vskip 1pc 
\hskip .5cm
\xy  /r0.3pc/:(-44.2,0)*+{\cdots};(44.3,0)*+{\cdots ,};
(-40,0)*+{\bullet}="V-4";
(-30,0)*+{\bullet}="V-3";
(-20,0)*+{\bullet}="V-2";
(-10,0)*+{\bullet}="V-1"; (0,0)*+{\bullet}="V0";
(10,0)*+{\bullet}="V1"; (20,0)*+{\bullet}="V2";
(30,0)*+{\bullet}="V3";
(40,0)*+{\bullet}="V4";
 "V-4";"V-3"**\crv{(-40,0)&(-30,0)};
 ?>*\dir{>}\POS?(.5)*+!D{};
 "V-3";"V-2"**\crv{(-30,0)&(-20,0)};
 ?>*\dir{>}\POS?(.5)*+!D{};
 "V-2";"V-1"**\crv{(-20,0)&(-10,0)};
 ?>*\dir{>}\POS?(.5)*+!D{};
 "V-1";"V0"**\crv{(-10,0)&(0,0)};
 ?>*\dir{>}\POS?(.5)*+!D{};
 "V0";"V1"**\crv{(0,0)&(10,0)};
 ?>*\dir{>}\POS?(.5)*+!D{};
 "V1";"V2"**\crv{(10,0)&(20,0)};
 ?>*\dir{>}\POS?(.5)*+!D{};
 "V2";"V3"**\crv{(20,0)&(30,0)};
 ?>*\dir{>}\POS?(.5)*+!D{};
 "V3";"V4"**\crv{(30,0)&(40,0)};
 ?>*\dir{>}\POS?(.5)*+!D{};
 (-35,1.5)*+{a};(-25,1.5)*+{a};
 (-15,1.5)*+{a};(-5,1.5)*+{a};(5,1.5)*+{e_1};
 (15,1.5)*+{e_2};(25,1.5)*+{e_3};(35,1.5)*+{e_4};
 (0.1,-2.5)*+{v_0};(10.1,-2.5)*+{v_1};
 (-9.9,-2.5)*+{v_{-1}};
 (-19.9,-2.5)*+{v_{-2}};
 (-29.9,-2.5)*+{v_{-3}};
 (-39.9,-2.5)*+{v_{-4}}; 
 (20.1,-2.5)*+{v_{2}};
 (30.1,-2.5)*+{v_{3}};
 (40.1,-2.5)*+{v_{4}}; 
\endxy 
\vskip 1.5pc
\noindent
if
$x:=a^\infty\in  \overline{\CL(E^\infty)}\setminus \CL(E^\infty)$   
and $N,n,l\geq 1$, then  
$$r(x_{[1,N]})=r(a^N)=r(a)=\{v_{-k}: k\geq 0\}\nsubseteq 
\cup_{i=1}^m r([v_n]_l,\ld_i) $$ 
for any paths $\ld_1, \dots, \ld_m \in \CL^*(E)$, 
namely the labeled space is not strongly cofinal although 
it is in the sense of \cite{JK}.
The result \cite[Theorem 3.8]{JK} can be improved as below 
with a slightly modified proof which   
we provide here for the sake of readers' convenience.

\vskip 1pc
 
\begin{thm}\label{thm-stcofinal} 
If $C^*(E, \CL, \CE)$ is simple, then $(E, \CL, \CE)$  is strongly cofinal.
\end{thm}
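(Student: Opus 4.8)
The plan is to argue by contraposition: assuming $(E,\CL,\CE)$ is not strongly cofinal, I will construct a nonzero proper gauge-invariant ideal of $C^*(E,\CL,\CE)$, contradicting simplicity. By the negation of Definition~\ref{strongly cofinal} I may fix $x\in\overline{\CL(E^\infty)}$ and a generalized vertex $[v]_l\in\CE$ such that, writing $A_N:=r(x_{[1,N]})$, one has
\[
A_N\nsubseteq\bigcup_{i=1}^m r([v]_l,\ld_i)
\]
for every $N\geq 1$ and every finite family $\ld_1,\dots,\ld_m\in\CL^*(E)$. Since $x_{[1,N]}\in\CL(E^N)$ and $\CE$ contains the range of every labeled path, each $A_N$ lies in $\CE$; moreover $A_N\neq\emptyset$ and $[v]_l\neq\emptyset$.

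Next I would let $H$ be the smallest hereditary and saturated subset of $\CE$ containing $[v]_l$, built in the usual two steps. The hereditary closure of $\{[v]_l\}$ is
\[
H_0=\Big\{A\in\CE:\ A\subseteq\bigcup_{i=1}^m r([v]_l,\ld_i)\ \text{ for some }m\geq 1,\ \ld_i\in\CL^{\#}(E)\Big\}
\]
(allowing $\ld_i=\ep$, so that $[v]_l=r([v]_l,\ep)\in H_0$), which one checks---using $r(r([v]_l,\ld),\af)=r([v]_l,\ld\af)$---to be closed under subsets, finite unions and relative ranges. Then $H$ arises from $H_0$ by transfinite saturation: $H^{(0)}=H_0$ and $H^{(\alpha+1)}=H^{(\alpha)}\cup\{A\in\CE: r(A,\af)\in H^{(\alpha)}\text{ for all }\af\in\CL^*(E)\}$, with unions taken at limit ordinals. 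A routine induction shows every $H^{(\alpha)}$ remains hereditary, so $H=\bigcup_\alpha H^{(\alpha)}$ is hereditary and saturated and contains the nonempty set $[v]_l$.

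The crux is to prove $A_N\notin H$ for every $N\geq 1$; this is the step where the argument departs slightly from \cite[Theorem 3.8]{JK} and where the hypothesis $x\in\overline{\CL(E^\infty)}$---rather than $x\in\CL(E^\infty)$---is exploited. The elementary fact that drives it is
\[
A_{N+k}\subseteq r\big(A_N,\,x_{[N+1,N+k]}\big),
\]
valid for all $N,k\geq 1$ because any edge-path realizing $x_{[1,N+k]}$ splits as an initial segment ending at a vertex of $A_N$ followed by an edge-path labeled $x_{[N+1,N+k]}$. I would then prove $A_N\notin H^{(\alpha)}$ for all ordinals $\alpha$ and all $N$ by transfinite induction. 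The base case $\alpha=0$ is the defining property of the witness $(x,[v]_l)$, the only delicate point being the term $[v]_l=r([v]_l,\ep)$ permitted in $H_0$: this is absorbed by passing from $N$ to $N+1$ and combining the displayed inclusion with $r(r([v]_l,\ld),\af)=r([v]_l,\ld\af)$ to re-express everything through honest paths in $\CL^*(E)$. For the successor step, if $A_N$ were first added at stage $\alpha+1$ then $r(A_N,x_{[N+1]})\in H^{(\alpha)}$, hence $A_{N+1}\in H^{(\alpha)}$ by the displayed inclusion and heredity, contradicting the inductive hypothesis; the limit step is immediate.

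Since $A_1\in\CE$ while $A_1\notin H$, the set $H$ is a proper hereditary saturated subset of $\CE$ that contains the nonempty set $[v]_l$. By the correspondence between hereditary saturated subsets of $\CE$ and gauge-invariant ideals of $C^*(E,\CL,\CE)$ valid for set-finite, receiver set-finite normal labeled spaces \cite{JKiP}, $H$ determines a nonzero proper ideal, contradicting the simplicity of $C^*(E,\CL,\CE)$. The main obstacle is precisely this crux step---keeping the transfinite saturation from sweeping any $A_N$ into $H$---and the inclusion $A_{N+k}\subseteq r(A_N,x_{[N+1,N+k]})$ is exactly what makes that control possible.
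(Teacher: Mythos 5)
Your proof is correct, but it takes a genuinely different route from the paper's. The paper argues directly at the operator level: starting from the same witness $(x,[v]_l)$, it notes that simplicity forces $p_{r(x_1)}$ into the ideal generated by $p_{[v]_l}$, approximates $p_{r(x_1)}$ within distance $1$ by a finite sum $\sum_j c_j (s_{\af_j}p_{A_j}s_{\bt_j}^*)p_{[v]_l}(s_{\gm_j}p_{B_j}s_{\dt_j}^*)$, and derives a contradiction through two norm estimates (first showing $r(x_1\dt_j)\subseteq\cup_i r([v]_l,\gm_i)$, then compressing by a partial isometry $y=p_{x_1}s_{x_{[2,N_0+1]}}$ chosen orthogonal to all $s_{\dt_i}$). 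You instead work entirely at the combinatorial level of $\CE$: you form the hereditary saturated closure $H$ of $[v]_l$, use the inclusion $r(x_{[1,N+k]})\subseteq r\bigl(r(x_{[1,N]}),x_{[N+1,N+k]}\bigr)$ to keep every $r(x_{[1,N]})$ out of $H$ through the saturation, and then invoke the correspondence between hereditary saturated subsets of $\CE$ and gauge-invariant ideals from \cite{JKiP}. Both arguments are sound. The paper's approach buys self-containedness: it uses only the multiplication rules of Remark~\ref{basics} and does not depend on the ideal-correspondence theorem, whose proof rests on the corrected gauge-invariant uniqueness theorem. Your approach buys brevity and transparency --- the obstruction is isolated as a purely set-theoretic statement rather than a norm estimate --- at the cost of importing \cite{JKiP} as a black box; since the paper itself does exactly this in the proof of Lemma~\ref{lem_N}, that is consistent with its toolkit. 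Your handling of the $\ep$-term in the hereditary closure by shifting from $N$ to $N+1$ is the right fix, and note that a one-step saturation in the style of $\bar H_{A}$ from Lemma~\ref{H_A} would let you dispense with the transfinite induction altogether.
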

\begin{proof}
Suppose to the contrary that there exist $[v]_l$ and  
$x\in  \overline{\CL(E^\infty)}$ 
such that
\begin{eqnarray}\label{assumption-stcofinal}
r(x_{[1,N]})\nsubseteq \cup_{i=1}^m r([v]_l,\ld_i)
\end{eqnarray}
for all $N\geq 1$ and any finite number of labeled paths 
$\ld_1, \dots, \ld_m$.
Let $I$ be the ideal generated by the projection $p_{[v]_l}$ and 
let $p_{x_1}:=p_{r(x_1)}$. 
Since $C^*(E,\CL,\CE)$ is simple, we must have $ p_{x_1}\in I$ and 
thus there is an element
$\sum_{j=1}^m c_j(s_{\af_j} p_{A_j} s_{\bt_j}^*)p_{[v]_l}
(s_{\gm_j} p_{B_j} s_{\dt_j}^*)$ in $I$ with $c_j\in \C$ 
such that
\begin{eqnarray}\label{approx}
\big\|\ \sum_{j=1}^m c_j(s_{\af_j} 
p_{A_j} s_{\bt_j}^*)p_{[v]_l}(s_{\gm_j} p_{B_j} s_{\dt_j}^*)
 -p_{x_1}\ \big\|< 1 
\end{eqnarray}
and the paths $\dt_j$'s have the same length 
$|\dt_j|=N_0\geq 1$.
Then
\begin{align*}
1>\ & \big\|\ \sum_{j}  
c_j(s_{\af_j} p_{A_j} s_{\bt_j}^*)p_{[v]_l}(s_{\gm_j} p_{B_j} s_{\dt_j}^*)
 -p_{x_1}\ \big\|\notag\\
\geq\ & \big\|\ \sum_{j} c_j(s_{\af_j} p_{A_j} s_{\bt_j}^*)p_{[v]_l}
(s_{\gm_j}p_{B_j} s_{\dt_j}^*) p_{x_1}
 -p_{x_1}\ \big\|\notag\\
=\ & \big\|\ \sum_{j} c_j(s_{\af_j} p_{A_j} s_{\bt_j}^*)p_{[v]_l}
(s_{\gm_j} p_{r([v]_l,\gm_j)\cap B_j \cap r(x_1\dt_j)}s_{\dt_j}^*)
 -p_{x_1}\ \big\|.
\end{align*}
We first show that for each $j=1,\dots, m$,
\begin{eqnarray}\label{delta-range}
r(x_1\dt_{j})\subset \cup_{i=1}^m r([v]_l, \gm_i).
\end{eqnarray}
If  $ r(x_1\dt_j) \nsubseteq \cup_{i=1}^m r([v]_l, \gm_i)$
for some $j$, then
  $r(x_1\dt_j)\setminus \cup_{i=1}^m r([v]_l, \gm_i)\neq \emptyset$ 
hence
  $ p_j:=p_{r(x_1\dt_j)\setminus \cup_{i=1}^m r([v]_l, \gm_i)}\neq 0.$ 
  Then with $J:=\{i\mid \dt_i=\dt_j\, \}$,
\begin{align*}
  1> &\ \big\|\ \big(\sum_{i} c_i(s_{\af_i} p_{A_i} s_{\bt_i}^*)p_{[v]_l}
(s_{\gm_i} p_{r([v]_l,\gm_i)\cap B_i \cap r(x_1\dt_i)}s_{\dt_i}^*)
 -p_{x_1}\big) s_{\dt_j}\ \big\|\\
 = & \ \big\|\  \sum_{i\in J} c_i(s_{\af_i} p_{A_i} s_{\bt_i}^*)p_{[v]_l}
 s_{\gm_i} p_{r([v]_l,\gm_i)\cap B_i \cap r(x_1\dt_i)}
 -p_{x_1}  s_{\dt_j}\ \big\|\\
 = & \ \big\|\  \sum_{i\in J} c_i(s_{\af_i} p_{A_i} s_{\bt_i}^*)p_{[v]_l}
 s_{\gm_i} p_{r([v]_l,\gm_i)\cap B_i \cap r(x_1\dt_i)}
 - s_{\dt_j}p_{r(x_1\dt_j)}  \ \big\|\\
  \geq & \ \big\|\  \sum_{i\in J} c_i(s_{\af_i} p_{A_i} s_{\bt_i}^*)p_{[v]_l}
 s_{\gm_i} p_{r([v]_l,\gm_i)\cap B_i \cap r(x_1\dt_i)}p_j
 - s_{\dt_j}p_{r(x_1\dt_j)} p_j \ \big\|\\
 = & \ \big\|\   s_{\dt_j} p_j \ \big\| =  1,
\end{align*}
which is a contradiction and (\ref{delta-range}) follows.
Also $\dt_i\neq x_{[2,N_0+1]}$ for each $1\leq i\leq m$.
In fact, if $\dt_i= x_{[2,N_0+1]}$ for some $i$,
then by (\ref{delta-range}),
$$r(x_{[1,N_0+1]})=r(x_1\dt_i) \subseteq \cup_{j=1}^m r([v]_l, \gm_j),$$
which is not possible because of (\ref{assumption-stcofinal}).
Thus $s_{\dt_i}^* s_{x_{[2,N_0+1]}} =0$ for
$i=1,\dots, m$.
 Then the partial isometry 
$y:=p_{x_1}s_{x_{[2,N_0+1]}}=s_{x_{[2,N_0+1]}}p_{r(x_{[1,N_0+1]})}$
 is nonzero since 
$s_{x_{[2,N_0+1]}}^*y=p_{r(x_{[1,N_0+1]})}\neq 0$, and
$s_{\dt_i}^*y=s_{\dt_i}^* p_{x_1}s_{x_{[2,N_0+1]}}
 =p_{r(x_1\dt_i)}s_{\dt_i}^*s_{x_{[2,N_0+1]}}=0$ 
for all $i$. 
 From (\ref{approx}), we have
$$
 1>  \ \big\|\ \big(\sum_{i=1}^m c_i(s_{\af_i}
 p_{A_i} s_{\bt_i}^*)p_{[v]_l}(s_{\gm_i}
p_{B_i} s_{\dt_i}^*)\big)yy^*  -p_{x_1}yy^*\ \big\| 
 =   \ \|yy^*\|=1,
$$
a contradiction, and we conclude that $(E,\CL,\CE)$ is strongly cofinal.
\end{proof}

\vskip 1pc
  
\section{Disagreeable labeled spaces of simple 
labeled graph $C^*$-algebras} 

In this section we prove that 
simplicity of a labeled graph $C^*$-algebra implies that 
the labeled space is disagreeable. 
For this, we will use condition $(c)$  
of the following lemma  
which is equivalent to disagreeability of a labeled space 
since the original definition of disagreeability 
seems a little complicated as 
recalled in  Definition~\ref{dfn-disagreeable}. 

\vskip 1pc 

\begin{lem} \label{lem_disagreeable} {\rm (\cite[Proposition 3.2]{JKaP})}
For a labeled space $(E,\CL,\CE)$, the following are equivalent:
\begin{enumerate}
\item[(a)] $(E,\CL,\CE)$ is disagreeable.  
\item[(b)] $[v]_l$ is disagreeable for all $v\in E^0$ and $l\geq 1$.
\item[(c)] For any nonempty set $A\in \CE$ and a path $\bt\in \CL^*(E)$, 
there is an $n\geq 1$ such that $\CL(AE^{|\bt|n})\neq \{\bt^n\}$.
\end{enumerate}
\end{lem}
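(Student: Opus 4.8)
The plan is to establish the two biconditionals $(b)\Leftrightarrow(a)$ and $(b)\Leftrightarrow(c)$, proving each direction of the latter by contraposition. The implication $(b)\Rightarrow(a)$ is immediate (take $L_v=1$ for every $v$), and for $(a)\Rightarrow(b)$ the one thing to check is that disagreeability of $[v]_l$ is monotone in the level: \emph{if $[v]_l$ is disagreeable and $1\le l'\le l$, then $[v]_{l'}$ is disagreeable.} This holds because $[v]_l\subseteq[v]_{l'}$, so a path $\af$ disagreeable for $[v]_l$ still meets $[v]_{l'}$ at its source, and its failure to admit a decomposition $\af=\bt\af'=\af'\gm$ with $|\bt|=|\gm|\le l$ a fortiori rules one out with the smaller bound $l'$; hence the $N$ witnessing disagreeability of $[v]_l$ also witnesses it for $[v]_{l'}$. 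Granting this, $(a)$ gives for each $v$ an $L_v$ with $[v]_l$ disagreeable for $l\ge L_v$, and for $l<L_v$ one applies the monotonicity statement to $[v]_{L_v}$.

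For $(b)\Rightarrow(c)$ I argue by contraposition. Suppose a nonempty $A\in\CE$ and a path $\bt\in\CL^*(E)$ satisfy $\CL(AE^{|\bt|n})=\{\bt^n\}$ for every $n\ge1$. Using $\CE=\{\cup_i[v_i]_l\}$, write $A$ as a finite union of generalized vertices, and since each level-$l$ class is a finite union of level-$(l+1)$ classes we may enlarge the level so that $l\ge|\bt|$. Fix one $[v_i]_l\subseteq A$; since $E$ has no sinks, $\CL([v_i]_lE^{|\bt|n})$ is a nonempty subset of $\{\bt^n\}$, hence equals it. Any path in $E$ from $[v_i]_l$ of length $m$ extends (no sinks) to one of length $|\bt|\lceil m/|\bt|\rceil$, whose label is a power of $\bt$; so the label of the original path is a prefix of $\bt^\infty$, hence has period $|\bt|\le l$, and for $m>|\bt|$ it is therefore agreeable for $[v_i]_l$. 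Consequently, for $n>|\bt|$ no label of a path of length $\ge n$ starting in $[v_i]_l$ is disagreeable for $[v_i]_l$, so $[v_i]_l$ is not disagreeable, contradicting $(b)$.

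The substantial direction is $(c)\Rightarrow(b)$, once more by contraposition: assuming some $[v]_l$ is not disagreeable, I must produce a nonempty $A\in\CE$ and a word $\bt$ with $\CL(AE^{|\bt|n})=\{\bt^n\}$ for all $n\ge1$. Non-disagreeability yields infinitely many ``good'' lengths $n$ for which \emph{every} label of a path of length $\ge n$ starting in $[v]_l$ is agreeable for $[v]_l$; pick a good $L_0\ge 2l$, a path $\ld_0$ in $E$ of length $L_0$ with $s(\ld_0)\in[v]_l$, and set $\af_0=\CL(\ld_0)$. Being agreeable, $\af_0$ has a period $\le l$; let $p_0\le l$ be its minimal period and $\bt_0=(\af_0)_{[1,p_0]}$, so that $\af_0$ is a prefix of $\bt_0^\infty$. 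Put $A:=r([v]_l,\af_0)$, which lies in $\CE$ by closure under relative ranges and is nonempty since it contains $r(\ld_0)$.

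The key step --- and the one I expect to be the main obstacle --- is to show that every path leaving a vertex of $A$ prolongs $\af_0$ only along $\bt_0^\infty$: if $\mu$ is a path from $u\in A$, written $u=r(\ld)$ with $\CL(\ld)=\af_0$ and $s(\ld)\in[v]_l$, then $\af_0\CL(\mu)=\CL(\ld\mu)$ has length $\ge L_0$ and starts in $[v]_l$, hence is agreeable with some period $q\le l$; its prefix $\af_0$, of length $L_0\ge 2l\ge p_0+q$, has both periods $p_0$ and $q$, so by the Fine--Wilf theorem it has period $\gcd(p_0,q)$, and minimality of $p_0$ forces $p_0\mid q$; since the first $q$ letters of $\af_0\CL(\mu)$ are then $\bt_0^{q/p_0}$ while $\af_0\CL(\mu)$ has period $q$, it follows that $\af_0\CL(\mu)$ is a prefix of $\bt_0^\infty$, whence $\CL(\mu)$ is the initial segment of $\bt_0^\infty$ beginning at position $L_0+1$ --- that is, of $\bt^\infty$, where $\bt$ is the cyclic rotation of $\bt_0$ by $L_0\bmod p_0$ (so $|\bt|=p_0\le l$). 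Using once more that $E$ has no sinks, $\CL(uE^{p_0n})=\{\bt^n\}$ for every $u\in A$ and $n\ge1$, so $\CL(AE^{|\bt|n})=\{\bt^n\}$, contradicting $(c)$. The only external ingredient is the classical Fine--Wilf periodicity theorem (the weaker form ``a word of length $\ge p+q$ with periods $p$ and $q$ has period $\gcd(p,q)$'' suffices and has a short Euclidean-algorithm proof); the difficulty is precisely to pass from the non-disagreeability hypothesis, which constrains the many outgoing paths of $[v]_l$ only one at a time, to the rigid uniform identity $\CL(AE^{|\bt|n})=\{\bt^n\}$ for a single word $\bt$ and a single set $A=r([v]_l,\af_0)\in\CE$.
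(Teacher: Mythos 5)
The paper does not actually prove this lemma: it is imported verbatim from \cite[Proposition 3.2]{JKaP}, so there is no in-paper argument to compare yours against. Judged on its own, your proof is correct. The easy equivalence $(a)\Leftrightarrow(b)$ via monotonicity of disagreeability in the level (using $[v]_l\subseteq[v]_{l'}$ for $l'\le l$) is right, and your contrapositive for $(b)\Rightarrow(c)$ correctly uses the no-sinks assumption to upgrade $\CL(AE^{|\bt|n})=\{\bt^n\}$ to ``every sufficiently long path out of a class $[v_i]_l\subseteq A$ (with $l\ge|\bt|$) is agreeable.'' The substantial direction $(c)\Rightarrow(b)$ is also sound: choosing a good length $L_0\ge 2l$, taking $A=r([v]_l,\af_0)$, and showing via Fine--Wilf that every continuation of $\af_0$ is forced along $\bt_0^\infty$ (so that $\CL(AE^{|\bt|n})=\{\bt^n\}$ for the appropriate rotation $\bt$ of $\bt_0$) is exactly the kind of rigidity one needs; the hypotheses $p_0,q\le l\le L_0/2$ do meet the Fine--Wilf length bound, and minimality of $p_0$ correctly yields $p_0\mid q$. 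Your reliance on the periodicity lemma is in the same spirit as the commuting-words fact (\cite[Lemma 3.1]{JKaP}, i.e.\ $\mu\nu=\nu\mu$ contradicts irreducibility) that the present paper invokes in the proof of Lemma~\ref{lem_N}, so nothing external beyond elementary combinatorics on words is really being used. The one cosmetic caveat is that ``$[v]_l$ is not disagreeable'' literally unwinds to ``there exist arbitrarily large good lengths,'' and you should note (as you implicitly do) that goodness is upward closed in the length, so a single good $L_0\ge 2l$ exists.
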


\vskip 1pc 

If $(E,\CL,\CE)$ is disagreeable, then  
it satisfies Condition $(L_\CE)$ 
as shown in \cite[Proposition 3.7]{JKaP} and 
\cite[Example 11.1]{COP}. 
But the the converse is not true in general as we see from 
the following proposition. 

\vskip 1pc

\begin{prop}\label{prop_disagreeable} 
 Consider the following conditions of a labeled space 
$(E,\CL,\CE)$.  
\begin{enumerate}
\item[(a)]  $(E,\CL,\CE)$ is disagreeable. 
\item[(b)] Every loop in $(E,\CL,\CE)$ has an exit.
\item[(c)] $(E,\CL,\CE)$ satisfies $(L_\CE)$, that is, 
every cycle has an exit.  
\end{enumerate}
Then  $(a)\Rightarrow (b)\Rightarrow (c)$ hold. 
But the other implications are not true, 
in general.
\end{prop}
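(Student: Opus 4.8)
The plan is to prove $(a)\Rightarrow(b)$ and $(b)\Rightarrow(c)$ straight from the definitions, and to handle ``the other implications are not true'' by exhibiting explicit labeled spaces. For $(a)\Rightarrow(b)$, suppose $(E,\CL,\CE)$ is disagreeable and let $(\af,A)$ be a loop, so $\emptyset\neq A\subset r(A,\af)$ with $\af\in\CL^*(E)$; assume for contradiction that it has no exit. The absence of a type (II) exit forces $A=r(A,\af)$, and the absence of a type (I) exit forces $\CL(AE^{|\af|})=\{\af\}$ (note $\af\in\CL(AE^{|\af|})$ because $A\neq\emptyset$ and $E$ has no sinks). Then a one-line induction gives $\CL(AE^{|\af|n})=\{\af^n\}$ for all $n\geq 1$: any $\mu\in AE^{|\af|(n+1)}$ factors as $\mu=\mu'\mu''$ with $|\mu'|=|\af|$, whence $\CL(\mu')=\af$ and $r(\mu')\in r(A,\af)=A$, so $\CL(\mu'')=\af^n$ by the inductive hypothesis and $\CL(\mu)=\af^{n+1}$. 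This contradicts disagreeability in the form of Lemma~\ref{lem_disagreeable}(c) applied to $A$ and $\bt=\af$, so the loop must have an exit.

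For $(b)\Rightarrow(c)$, let $(\af,A)$ be a cycle. Taking $B=A$ in the defining condition (legitimate since $A\in A\sqcap\CE$) gives $A=r(A,\af)$, so $(\af,A)$ is in particular a loop; by (b) it has an exit, which, as already observed just after the definition of a cycle, is necessarily of type (I). Hence every cycle has an exit, i.e. $(E,\CL,\CE)$ satisfies $(L_\CE)$. Both of these are essentially formal; the substance of the proposition is the two non-implications, which I would settle by examples. For $(b)\not\Rightarrow(a)$ I would construct a labeled space carrying a nonempty $A\in\CE$ and a word $\bt$ for which every infinite path issued from a vertex of $A$ has label $\bt^\infty$, so that $\CL(AE^{|\bt|n})=\{\bt^n\}$ for all $n$ and the space fails to be disagreeable by Lemma~\ref{lem_disagreeable}(c), while arranging that $(\bt,A)$ itself is not a loop (i.e. $A\not\subset r(A,\bt)$) and, more delicately, that no loop in the space is label-rigid; since $A$ must be a finite union of generalized vertices, the ``branchless'' part carrying the label $\bt^\infty$ has to be spread across generalized vertices that only split apart at higher and higher $\sim_l$-levels, and the distinguishing of vertices in $A$ must therefore be supplied entirely by incoming edges (the outgoing labels from $A$ being rigid), distributed over the levels so that $A$ stays a single generalized vertex. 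For $(c)\not\Rightarrow(b)$ I would instead construct a loop $(\af,A)$ with $r(A,\af)=A$ and $\CL(AE^{|\af|})=\{\af\}$ — so $(\af,A)$ has no exit and (b) fails — but with the Boolean algebra $A\sqcap\CE$ chosen so that $\af$ moves every minimal nonempty member of it (e.g. $A\sqcap\CE$ carries an $\af$-action conjugate to a shift with no periodic orbit among its atoms, or is atomless with $\af$ fixing no nonempty member); then $(\af,A)$ is not a cycle, and one checks that no cycle without an exit occurs elsewhere either.

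The main obstacle, in both constructions, is the bookkeeping rather than any idea: one must compute $\CE$ and the induced Boolean algebras $A\sqcap\CE$ exactly for the chosen edge data, and then verify that the no-sinks/no-sources padding one is forced to add does not secretly create a label-rigid loop (for the second example) or any loop without an exit at all (for the first). I expect that pinning down $\CE$ for the first example — making the generalized vertices split only as slowly as required, so that $A\in\CE$ yet every genuine loop acquires an exit — will be the most delicate point.
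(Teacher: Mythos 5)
Your two positive implications are correct and essentially the paper's argument. For $(a)\Rightarrow(b)$ you argue by contradiction: no exit forces $A=r(A,\af)$ and $\CL(AE^{|\af|})=\{\af\}$, hence by induction $\CL(AE^{|\af|n})=\{\af^n\}$ for all $n$, contradicting Lemma~\ref{lem_disagreeable}(c). The paper runs the same argument in the direct direction (invoke Lemma~\ref{lem_disagreeable}(c) to get some $\bt\in\CL(AE^{n|\af|})$ with $\bt\neq\af^n$, then locate the first block of $\bt$ that differs from $\af$ and push it back to $A$ using $A=r(A,\af^k)$); the two are interchangeable, and your induction is arguably cleaner. $(b)\Rightarrow(c)$ is the same one-liner in both.

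The genuine gap is that the proposition also asserts the failure of the reverse implications, and you have not produced the counterexamples --- you describe desiderata for two constructions and explicitly defer the ``delicate'' verification of $\CE$ and of the loop structure. As stated, that part of the proof is incomplete. Moreover, the constraints you impose on yourself are much heavier than what is needed. For $(b)\nRightarrow(a)$ the paper uses the two-sided infinite path graph with edges labeled $\dots,-3,-2,-1,\af,\af,\af,\dots$: it has no loops whatsoever (no finite path returns to its source, and no $(\af,A)$ with $A\subset r(A,\af)$ exists since $r(A,\af)$ always shifts strictly to the right of $A$ on the $\af$-part), so $(b)$ holds vacuously, while $\CL(\{v_0\}E^{n})=\{\af^n\}$ kills disagreeability --- no ``label-rigid loop'' bookkeeping is required because there are no loops at all. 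For $(c)\nRightarrow(b)$ the paper adds a single $\af$-labeled loop at $v_0$ to the same graph: then $(\af,A_1)$ with $A_1=r(\af)=\{v_0,v_1,\dots\}$ is a loop with no exit (so $(b)$ fails), but it is not a cycle because $\{v_0\}\in A_1\sqcap\CE$ and $r(\{v_0\},\af)=\{v_0,v_1\}\neq\{v_0\}$, and no other cycles exist; your proposed ``atomless Boolean algebra with a fixed-point-free $\af$-action'' is a far more elaborate route to the same end. You would need to actually carry out one of these constructions (or the paper's) to close the proof.
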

\begin{proof} 
$(b)\Rightarrow (c)$ 
is clear since every cycle is a loop. 

$(a)\Rightarrow (b)$  
Let $(A,\af)$ be a loop so that $A\subset r(A,\af)$.
If $A\subsetneq r(A,\af)$, then  the loop has an exit of type (II). 
So we may assume  that $A= r(A,\af)$. 
By Lemma~\ref{lem_disagreeable}, there is an $n\geq 1$ 
such that $\CL(AE^{n|\af|})\neq \{\af^n\}$. 
Choose $\bt\in \CL(AE^{n|\af|})$ with $\bt\neq \af^n$. 
If $\bt_{[1,|\af|]}\neq \af$, then 
$\CL(AE^{|\af|})\neq \{\af\}$ and 
$(A,\af)$ has an exit of type (I).  
If $\bt:=\af^k\dt$ for some $1\leq k\leq n-1$ and 
$\dt\in \CL^*(E)$ with $\dt_{[1,|\af|]}\neq \af$, then 
the loop $(A,\af)$ has an exit of type (I) since 
from $A=r(A,\af^k)$ we have 
$$\af\neq \dt_{[1,|\af|]}\in \CL(r(A,\af^k)E^{|\af|})=\CL(AE^{|\af|}).$$ 

$(b)\nRightarrow (a)$ 
The  labeled space $(E,\CL,\CE)$ of the 
following labeled graph is obviously not disagreeable   
while (b) is trivially satisfied  since it has no loops.

\vskip 1pc 
\hskip .5cm
\xy  /r0.3pc/:(-44.2,0)*+{\cdots};(44.3,0)*+{\cdots .};
(-40,0)*+{\bullet}="V-4";
(-30,0)*+{\bullet}="V-3";
(-20,0)*+{\bullet}="V-2";
(-10,0)*+{\bullet}="V-1"; (0,0)*+{\bullet}="V0";
(10,0)*+{\bullet}="V1"; (20,0)*+{\bullet}="V2";
(30,0)*+{\bullet}="V3";
(40,0)*+{\bullet}="V4";
 "V-4";"V-3"**\crv{(-40,0)&(-30,0)};
 ?>*\dir{>}\POS?(.5)*+!D{};
 "V-3";"V-2"**\crv{(-30,0)&(-20,0)};
 ?>*\dir{>}\POS?(.5)*+!D{};
 "V-2";"V-1"**\crv{(-20,0)&(-10,0)};
 ?>*\dir{>}\POS?(.5)*+!D{};
 "V-1";"V0"**\crv{(-10,0)&(0,0)};
 ?>*\dir{>}\POS?(.5)*+!D{};
 "V0";"V1"**\crv{(0,0)&(10,0)};
 ?>*\dir{>}\POS?(.5)*+!D{};
 "V1";"V2"**\crv{(10,0)&(20,0)};
 ?>*\dir{>}\POS?(.5)*+!D{};
 "V2";"V3"**\crv{(20,0)&(30,0)};
 ?>*\dir{>}\POS?(.5)*+!D{};
 "V3";"V4"**\crv{(30,0)&(40,0)};
 ?>*\dir{>}\POS?(.5)*+!D{};
 (-35,1.5)*+{-4};(-25,1.5)*+{-3};
 (-15,1.5)*+{-2};(-5,1.5)*+{-1};(5,1.5)*+{\af};
 (15,1.5)*+{\af};(25,1.5)*+{\af};(35,1.5)*+{\af};
 (0.1,-2.5)*+{v_0};(10.1,-2.5)*+{v_1};
 (-9.9,-2.5)*+{v_{-1}};
 (-19.9,-2.5)*+{v_{-2}};
 (-29.9,-2.5)*+{v_{-3}};
 (-39.9,-2.5)*+{v_{-4}}; 
 (20.1,-2.5)*+{v_{2}};
 (30.1,-2.5)*+{v_{3}};
 (40.1,-2.5)*+{v_{4}}; 
\endxy 
\vskip 1.5pc

$(c)\nRightarrow (b)$ and $(c)\nRightarrow (a)$ 
Note that the labeled space $(E,\CL,\CE)$ 
of the following labeled graph, 
which is not disagreeable  clearly,  
has loops $(A_i,\af)$, $i=0,1$, where 
$A_0=\{v_0\}$ and $A_1:=r(\af)=\{v_0, v_1, \dots\}$. 
The loop $(A_1, \af)$ has no exits while $(A_0, \af)$ 
has an exit of type (II). Thus 
(b) is not satisfied for $(E,\CL,\CE)$. 
But the labeled space has no cycles and thus 
(c) is trivially satisfied.

\vskip 1pc 
\hskip .5cm
\xy  /r0.3pc/:(-44.2,0)*+{\cdots};(44.3,0)*+{\cdots .};
(-40,0)*+{\bullet}="V-4";
(-30,0)*+{\bullet}="V-3";
(-20,0)*+{\bullet}="V-2";
(-10,0)*+{\bullet}="V-1"; (0,0)*+{\bullet}="V0";
(10,0)*+{\bullet}="V1"; (20,0)*+{\bullet}="V2";
(30,0)*+{\bullet}="V3";
(40,0)*+{\bullet}="V4";
 "V-4";"V-3"**\crv{(-40,0)&(-30,0)};
 ?>*\dir{>}\POS?(.5)*+!D{};
 "V-3";"V-2"**\crv{(-30,0)&(-20,0)};
 ?>*\dir{>}\POS?(.5)*+!D{};
 "V-2";"V-1"**\crv{(-20,0)&(-10,0)};
 ?>*\dir{>}\POS?(.5)*+!D{};
 "V-1";"V0"**\crv{(-10,0)&(0,0)};
 ?>*\dir{>}\POS?(.5)*+!D{};
 "V0";"V1"**\crv{(0,0)&(10,0)};
 ?>*\dir{>}\POS?(.5)*+!D{};
 "V0";"V0"**\crv{(0,0)&(-5,4)&(0,8)&(5,4)&(0,0)};
 ?>*\dir{>}\POS?(.5)*+!D{};
 "V1";"V2"**\crv{(10,0)&(20,0)};
 ?>*\dir{>}\POS?(.5)*+!D{};
 "V2";"V3"**\crv{(20,0)&(30,0)};
 ?>*\dir{>}\POS?(.5)*+!D{};
 "V3";"V4"**\crv{(30,0)&(40,0)};
 ?>*\dir{>}\POS?(.5)*+!D{};
 (-35,1.5)*+{-4};(-25,1.5)*+{-3};
 (-15,1.5)*+{-2};(-5,1.5)*+{-1};(5,1.5)*+{\af};
 (0,8)*+{\af};
 (15,1.5)*+{\af};(25,1.5)*+{\af};(35,1.5)*+{\af};
 (0.1,-2.5)*+{v_0};(10.1,-2.5)*+{v_1};
 (-9.9,-2.5)*+{v_{-1}};
 (-19.9,-2.5)*+{v_{-2}};
 (-29.9,-2.5)*+{v_{-3}};
 (-39.9,-2.5)*+{v_{-4}}; 
 (20.1,-2.5)*+{v_{2}};
 (30.1,-2.5)*+{v_{3}};
 (40.1,-2.5)*+{v_{4}}; 
\endxy 
\end{proof}

\vskip 1.5pc 

\begin{lem}\label{minimal} {\rm (\cite[Lemma 4.6]{JKaP})} 
Let a labeled space $(E,\CL,\CE)$ have a loop $(A,\bt)$ 
without an exit. If $A$ is a minimal set in $\CE$, then 
the $C^*$-algebra $C^*(E,\CL,\CE)$ 
has a hereditary subalgebra which is isomorphic to 
$M_n(C(\mathbb T))$ for some $n\geq 1$,  
in particular $C^*(E,\CL,\CE)$ is not simple.
\end{lem}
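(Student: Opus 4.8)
The plan is to use the loop $(A,\bt)$ without an exit to build a copy of $M_n(C(\T))$ as a corner of $C^*(E,\CL,\CE)$, and then to argue that a simple $C^*$-algebra cannot have such a hereditary subalgebra. First I would examine what ``no exit'' buys us: since $(A,\bt)$ has no exit of type~(II) we have $A=r(A,\bt)$, and since it has no exit of type~(I) we have $\CL(AE^{|\bt|})=\{\bt\}$. Iterating, $\CL(AE^{k|\bt|})=\{\bt^k\}$ for every $k\geq 1$, and by set-finiteness and the structure of $A$ as a finite union of generalized vertices, for each $j$ with $1\le j<|\bt|$ the set $\CL(AE^{j})=\{\bt_{[1,j]}\}$ is a single word as well; write $\bt = \bt_1\cdots\bt_n$ (so here $n=|\bt|$) and set $A_j := r(A,\bt_{[1,j]})\in\CE$ for $0\le j\le n$, with $A_0=A_n=A$. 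Because $A$ is minimal in $\CE$ and $A=r(A,\bt)$, weak left-resolvingness and the definition of $\CE$ force each $A_j$ to be nonempty; minimality of $A$ will be used again below to control the relative ranges.

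Next I would write down the candidate matrix units. For $0\le i,j\le n-1$ put
\[
e_{ij} := s_{\bt_{[1,i]}}\, p_{A_i}\, s_{\bt_{[1,j]}}^{*},
\]
with the convention $s_{\bt_{[1,0]}}=s_{\ep}$ (the multiplier-algebra unit), so $e_{jj}=p_{A_j}$. Using the multiplication rule in Remark~\ref{basics} together with the no-exit identities $A_i=r(A_i,\bt)$ and $\CL(A_iE^{1})=\{\bt_{i+1}\}$, one checks $e_{ij}e_{kl}=\dt_{jk}e_{il}$ and $e_{ij}^{*}=e_{ji}$, so $\{e_{ij}\}$ is a system of $n\times n$ matrix units. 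Set $u := \sum_{j=0}^{n-1} s_{\bt_{[j+1,?]}}\cdots$ — more carefully, let
\[
u := s_{\bt_1}p_{A_1}s_{\bt_{[1,n-1]}}^* \quad\text{type term, summed cyclically,}
\]
i.e. $u:=e_{0,n-1}s_\bt p_A + \sum_{j=1}^{n-1} e_{j,j-1}$; the point is to produce a unitary in the corner $p_A C^*(E,\CL,\CE) p_A$ (with $p_A=e_{00}$) whose $n$-th power corresponds to traversing $\bt$ once. Then the C$^*$-subalgebra generated by $\{e_{ij}\}$ and $u$ is a homomorphic image of $M_n(C(\T))$; it is nonzero because $p_A\ne 0$, and since $M_n(C(\T))$ has plenty of proper ideals this image need not a priori be faithful, so here I would instead invoke the Cuntz--Krieger uniqueness theorem, Theorem~\ref{CK uniqueness thm}: provided $(E,\CL,\CE)$ satisfies Condition $(L_\CE)$ the canonical homomorphism is injective, giving a genuine copy of $M_n(C(\T))$. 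The corner $p_A C^*(E,\CL,\CE)p_A$ is a hereditary subalgebra, and one shows it equals this copy by checking that minimality of $A$ forces $p_A s_\af p_B s_\gm^* p_A$ to land among the $e_{ij}u^k$.

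The main obstacle, and the step needing the most care, is the very last clause: the hypothesis of the lemma does \emph{not} assume Condition $(L_\CE)$ — indeed the whole point is to \emph{contradict} simplicity — so I cannot simply quote Theorem~\ref{CK uniqueness thm}. The clean way around this is to argue directly that $C^*(E,\CL,\CE)$ is not simple without first proving the hereditary subalgebra is exactly $M_n(C(\T))$: it suffices to exhibit a proper ideal. Take the gauge action $\gm$ and let $\Phi$ be the faithful conditional expectation onto the fixed-point algebra; the element $z:=\sum_{j=0}^{n-1} s_{\bt_{[1,j+1]}}p_A s_{\bt_{[1,j]}}^*$ (a ``partial unitary'' implementing the loop) satisfies $z^n = \sum_j s_{\bt_{[1,j]}}s_\bt p_A s_{\bt_{[1,j]}}^* = \sum_j e_{jj}\cdot(\text{shift}) = \sum_j p_{A_j}$ up to identifying $A_n$ with $A_0$, so $z$ generates, together with the $e_{jj}$, a commutative-over-$M_n$ copy of $C(\T)$ inside the corner; evaluating at a point $\ld\in\T$ gives a nonzero representation whose kernel, pulled back, is a proper ideal. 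I would then note that if instead $(A,\bt)$ has an exit we are not in the hypotheses, so the dichotomy ``either $A\subsetneq r(A,\bt)$ (excluded) or the matrix computation goes through verbatim'' closes the argument. The remaining routine verifications — that the $e_{ij}$ are matrix units, that $z$ is a partial isometry with the stated $n$-th power, and that the corner is hereditary — are direct applications of Remark~\ref{basics} and are omitted from this sketch.
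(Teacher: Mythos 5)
The paper itself gives no proof of this lemma --- it is quoted from \cite[Lemma 4.6]{JKaP} --- so there is no internal argument to compare against; I will assess your proposal on its merits. Your overall strategy (use the exitless loop to realize a corner of $C^*(E,\CL,\CE)$ as matrices over $C(\T)$, then invoke the fact that hereditary subalgebras of simple $C^*$-algebras are simple) is the standard and correct one, and your derivation of $A=r(A,\bt)$ and $\CL(AE^{k|\bt|})=\{\bt^k\}$ from the no-exit hypothesis is fine. But the execution has genuine gaps. First, your matrix units do not satisfy the matrix-unit relations: with $e_{ij}=s_{\bt_{[1,i]}}p_{A_i}s_{\bt_{[1,j]}}^*$, the multiplication rule of Remark~\ref{basics} gives $e_{ij}e_{jl}=s_{\bt_{[1,i]}}p_{A_i\cap A_j}s_{\bt_{[1,l]}}^*$, which is not $e_{il}$ unless $A_i\subseteq A_j$; worse, the diagonal projections $p_{A_0},\dots,p_{A_{n-1}}$ need not be mutually orthogonal, since the sets $A_j=r(A,\bt_{[1,j]})$ can intersect (only $A$ itself is assumed minimal, not the $A_j$). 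Deciding which of the $A_j$ coincide or are disjoint is exactly where minimality must do its work and where the integer $n$ of the statement comes from; your sketch never confronts this. The element $u$ (or $z$) implementing the loop is also left undefined; the correct choice is simply $u:=s_\bt p_A=p_As_\bt p_A$, which is a unitary in the corner $p_AC^*(E,\CL,\CE)p_A$ because iterating relation (iv) with $\CL(AE^{|\bt|})=\{\bt\}$ gives $p_A=s_\bt p_{r(A,\bt)}s_\bt^*=s_\bt p_As_\bt^*$, while $u^*u=p_{A\cap r(\bt)}=p_A$.

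Second, and more seriously, your concluding step (``evaluating at a point $\ld\in\T$ gives a nonzero representation whose kernel, pulled back, is a proper ideal'') presupposes that $C^*(u)\cong C(\T)$, or at least that $u$ is not a scalar multiple of $p_A$ --- which is precisely what must be proved: a priori $\mathrm{sp}(u)$ could be a single point, the corner could be $\C$, and $\C$ is simple. You correctly observe that Theorem~\ref{CK uniqueness thm} is unavailable here, but the substitute is not the conditional expectation; it is the observation that the gauge action satisfies $\gm_z(u)=z^{|\bt|}u$, so that $\mathrm{sp}(u)$ is a nonempty closed subset of $\T$ invariant under every rotation and hence equals $\T$. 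With that in hand, together with the (correct but only gestured-at) use of minimality to show that $p_As_\af p_Bs_\gm^*p_A$ reduces to powers of $u$, the argument closes: the corner is a non-simple hereditary subalgebra, so $C^*(E,\CL,\CE)$ is not simple. As written, the proposal identifies the right obstacles but does not overcome them.
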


\vskip 1pc

\begin{lem}\label{H_A} 
For a nonempty set $A\in \CE$, let  
$$H_{A} :=\big\{\cup_{i=1}^k\, C_i : k\geq 1,\  C_i\in 
 r(A,\bt)\sqcap \CE,\  \bt\in \CL^{\#}(E)\, \big\}.
$$
Then $H_A$ is a hereditary subset of $\CE$ and 
$$\bar H_{A}:=
\{ B\in \CE : \exists n\geq 1 \text{ such that } 
r(B,\af)\in H_{A} \text{ for all } \af\in \CL(E^{\geq n}) 
\, \}$$  
is a hereditary saturated subset of $\CE$ with $H_A\subset \bar H_{A}$. 
\end{lem}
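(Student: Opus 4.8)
The plan is to verify the required closure properties by hand, leaning on two elementary identities for relative ranges: that $r(\cdot,\af)$ distributes over finite unions, $r\big(\bigcup_i C_i,\af\big)=\bigcup_i r(C_i,\af)$, and that relative ranges compose, $r(r(A,\bt),\gm)=r(A,\bt\gm)$ for $\bt,\gm\in\CL^{\#}(E)$; both are read off directly from the definition $r(A,\af)=\{r(\ld):\ld\in AE^{\geq1},\ \CL(\ld)=\af\}$ together with $a\ep=\ep a$, $r(A,\ep)=A$. I also record that $\emptyset\in H_A$ (take $k=1$, $C_1=\emptyset\subset r(A,\ep)=A$), and that whenever $r(C,\gm)\neq\emptyset$ with $C\subset r(A,\bt)$ the word $\bt\gm$ is realized in $E$, so $\bt\gm\in\CL^{\#}(E)$; these routine remarks let me ignore empty terms throughout.

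First I would show $H_A$ is hereditary. Closure under finite unions is immediate. For subsets, if $B\in\CE$ and $B\subset\bigcup_{i=1}^k C_i$ with each $C_i\in r(A,\bt_i)\sqcap\CE$, then $B=\bigcup_i(B\cap C_i)$ with $B\cap C_i\in\CE$ and $B\cap C_i\subset r(A,\bt_i)$, so $B\in H_A$. For relative ranges, if $D=\bigcup_i C_i\in H_A$ and $\gm\in\CL^*(E)$, the two identities give $r(D,\gm)=\bigcup_i r(C_i,\gm)$ with $r(C_i,\gm)\in\CE$ and $r(C_i,\gm)\subset r(r(A,\bt_i),\gm)=r(A,\bt_i\gm)$; dropping empty terms (or noting $r(D,\gm)=\emptyset\in H_A$ if all vanish) writes $r(D,\gm)$ in the form defining $H_A$. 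The inclusion $H_A\subset\bar H_A$ is then immediate: for $B\in H_A$ and every $\af\in\CL(E^{\geq1})$ we have $r(B,\af)\in H_A$, so $B\in\bar H_A$ with witness $n=1$.

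Next I would check $\bar H_A$ is hereditary. If $B'\in\CE$, $B'\subset B$, and $n$ witnesses $B\in\bar H_A$, then $r(B',\af)\subset r(B,\af)\in H_A$ for $\af\in\CL(E^{\geq n})$, and $H_A$ is closed under subsets, so $n$ witnesses $B'\in\bar H_A$. For a union, if $n_1,n_2$ witness $B_1,B_2\in\bar H_A$, then for $\af\in\CL(E^{\geq\max(n_1,n_2)})$ one has $r(B_1\cup B_2,\af)=r(B_1,\af)\cup r(B_2,\af)\in H_A$ by closure of $H_A$ under unions. For a relative range, if $n$ witnesses $B\in\bar H_A$ and $\gm\in\CL^*(E)$, then for $\af\in\CL(E^{\geq n})$ the composition identity gives $r(r(B,\gm),\af)=r(B,\gm\af)$, which lies in $H_A$: if it is empty this is clear, and otherwise $\gm\af$ is a realized path of length $\geq|\af|\geq n$, hence in $\CL(E^{\geq n})$, so $r(B,\gm\af)\in H_A$ by the choice of $n$. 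Thus $n$ witnesses $r(B,\gm)\in\bar H_A$.

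Finally I would prove $\bar H_A$ is saturated, and this is the step where the standing hypotheses are genuinely used. Suppose $B\in\CE$ and $r(B,\af)\in\bar H_A$ for every $\af\in\CL^*(E)$; we may assume $B\neq\emptyset$, and then $\CL(BE^1)=\{a_1,\dots,a_p\}$ is finite and nonempty by set-finiteness and the no-sinks assumption. Choose, for each $j$, a witness $n_j$ for $r(B,a_j)\in\bar H_A$, and put $N:=1+\max_j n_j$. For $\af\in\CL(E^{\geq N})$, either $r(B,\af)=\emptyset\in H_A$, or $\af\in\CL(BE^{|\af|})$, in which case its first letter is some $a_j\in\CL(BE^1)$, say $\af=a_j\af'$ with $\af'$ realized of length $|\af|-1\geq n_j$; then $r(B,\af)=r(r(B,a_j),\af')\in H_A$ by the choice of $n_j$. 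Hence $N$ witnesses $B\in\bar H_A$, as needed. I expect this last step to be the main obstacle, not because it is deep but because it is the only place where one must produce a single threshold that works uniformly over all long words out of $B$; this forces the use of set-finiteness to take the maximum over the finitely many first letters, while everything else is bookkeeping with the two relative-range identities.
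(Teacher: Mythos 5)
Your proof is correct and follows essentially the same route as the paper's: the same relative-range identities, the same witness-maximum argument for unions and relative ranges in $\bar H_A$, and the same saturation step that uses set-finiteness to reduce to the finitely many first letters in $\CL(BE^1)$ and then takes a single threshold one larger than the maximum of their witnesses. The only difference is that you spell out the hereditariness of $H_A$ and the empty-set edge cases, which the paper dismisses as easy.
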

\begin{proof}
It is easy to check that $H_{A}$ is a hereditary set. 
For convenience, we write a number $n$ in the definition of 
$\bar H_{A}$ for $B\in \CE$ as $n_B$ although it is not unique. 
Clearly $\bar H_{A}$ is  closed under  subsets.
If $A_1, A_2\in \bar H_{A}$, then 
$r(A_1\cup A_2,\af)=r(A_1,\af)\cup r(A_2,\af)\in  H_{A}$ 
whenever $|\af|\geq \max\{n_{A_1}, n_{A_2}\}$. 
Hence $\bar H_{A}$ is closed under finite unions. 
Let $B\in  \bar H_{A}$ and $|\sm|\geq 1$. 
Then $$r(r(B,\sm),\af)=r(B,\sm\af)\in H_{A}$$ whenever 
$|\af|\geq n_B$ because then $|\sm\af|\geq n_B$. 
Thus $\bar H_{A}$ is also closed under relative ranges, which  
shows that $\bar H_{A}$ is a hereditary subset of $\CE$. 
To see that $\bar H_{A}$ is saturated, let $B\in \CE$ 
satisfy $r(B,\af)\in \bar H_{A}$ 
for all paths $\af$ with $|\af|\geq 1$. 
We have to show that $B\in \bar H_{A}$. 
Since our labeled space is assumed to be set-finite, there are 
only finitely many labeled edges, say $\dt_1, 
\dots, \dt_k$, emitting out of $B$. 
Then  
$r(B,\dt_i)\in \bar H_{A}$ for each $i$,  and thus 
there is an $n_i\geq 1$  such that
$r(r(B,\dt_i),\af)\in H_{A}$ for all 
$\af\in \CL(E^{\geq n_i})$.
For $n:=\max_{1\leq i\leq k}\{n_i\}$, 
we then have 
$r(B,\dt_i\af)=r(r(B,\dt_i),\af)\in H_{A}$  
whenever $|\af| \geq n$ 
and $1\leq i\leq k$.
This means that $r(B, \af)\in H_{A}$ for all $\af$ 
with $|\af|\geq n+1$. 
Thus $B\in \bar H_{A}$ follows as desired. 
\end{proof}

\vskip 1pc

\begin{notation}
For a path $\bt:=\bt_1\cdots \bt_{|\bt|}\in \CL^*(E)$, 
let $$\bar{\bt}:=\bt\bt\bt\cdots$$ denote 
the infinite repetition of $\bt$, namely    
$$\bar{\bt}_1 \bar{\bt}_2 \bar{\bt}_3 \cdots 
=\bt_1\cdots\bt_{|\bt|}\bt_1\cdots \bt_{|\bt|}\cdots.$$ 
Then for each $j\geq 1$, 
we have $\bar{\bt}_j=\bt_k$ for some $1\leq k\leq |\bt|$
with   $k=j \, (\text{mod}\, |\bt|)$. 
The initial path $\bar{\bt}_1 \cdots \bar{\bt}_j$ 
 of $\bar\bt$, 
$j\geq 1$, is denoted by $\bar{\bt}_{[1,j]}$ as before.

\end{notation}

\vskip 1pc 

We will call a path $\bt\in \CL^*(E)$ {\it irreducible} 
if it is not a repetition of its proper initial path. 
The following Lemma~\ref{lem_N}   
will be used to derive a contradiction in 
the proof of Theorem~\ref{main}, but then  
we see from Theorem~\ref{main} that  
there does not exist a labeled space 
$(E,\CL,\CE)$ satisfying the assumptions of 
Lemma~\ref{lem_N}

\vskip 1pc

\begin{lem} \label{lem_N}
Let $C^*(E,\CL,\CE)$ be  a simple $C^*$-algebra and $A_0\in \CE$ 
be a nonempty set.   
If there exists  an irreducible path $\bt$ such that 
\begin{eqnarray}\label{beta} 
\CL(A_0 E^{\geq 1})
=\{\bar\bt_{[1,n]}:n\geq 1\}
=\{\bt^m\bt': m\geq 0, \ \bt' \text{ is an initial path of } \bt \}, 
\end{eqnarray}
or equivalently $\CL(A_0E^{n|\bt|})=\{\bt^n\}$ 
for all $n\geq 1$, then the following hold. 
\begin{enumerate}
\item[(i)] 
There is an $N\geq 1$ such that for all $k\geq 1$, 
$$r(A_0,\bar{\bt}_{[1,N+k]})
\subset \cup_{j=1}^N r(A_0,\bar{\bt}_{[1,j]}). $$ 

\item[(ii)] 
There is an $N_0\geq 1$ such that for all $k\geq 1$, 
\begin{eqnarray}\label{N_0} 
r(A_0,\bt^{N_0+k})\subset \cup_{j=1}^{N_0} r(A_0,\bt^j). 
\end{eqnarray}
Moreover $A=r(A,\bt)$ for $A:=\cup_{j=1}^{N_0} r(A_0,\bt^{j})$.
\end{enumerate}

\end{lem}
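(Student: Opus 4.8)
The plan is to exploit simplicity of $C^*(E,\CL,\CE)$ to force the hereditary saturated set $\bar H_{A_0}$ (from Lemma~\ref{H_A}) to be everything, and then to read off the finiteness statements in (i) and (ii) by an argument parallel to the proof of Theorem~\ref{thm-stcofinal}. For part (i): the equation (\ref{beta}) says that the only labeled paths emitted from $A_0$ are initial segments of $\bar\bt$, so the sets $r(A_0,\bar\bt_{[1,j]})$, $j\geq 1$, are the only ``reachable'' sets via paths from $A_0$ in the relative-range sense. Since $C^*(E,\CL,\CE)$ is simple, no nonempty hereditary saturated subset of $\CE$ can be proper, so the ideal generated by $p_{A_0}$ is all of $C^*(E,\CL,\CE)$; in particular $p_{r(x_1)}$ lies in it for any first edge $x_1$, and one writes a finite sum of the form $\sum_j c_j (s_{\af_j}p_{A_j}s_{\bt_j}^*)p_{A_0}(s_{\gm_j}p_{B_j}s_{\dt_j}^*)$ within distance $1$ of $p_{r(x_1)}$, arranging all $\dt_j$ to have a common length. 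Applying the Remark~\ref{basics} multiplication rules and compressing by $p_{r(x_1)}$ on the right forces the $\gm_j$ appearing to be initial paths of $\bar\bt$ (anything else multiplies $p_{A_0}$ to $0$ on the appropriate side, using (\ref{beta})), and then the standard estimate — hitting the inequality with a suitable partial isometry $s_{\dt_j}$ or $p_{r(\text{something})}$ — shows that the ranges $r(x_1\dt_j)$ are swallowed by a \emph{finite} union $\cup_{i=1}^m r(A_0,\bar\bt_{[1,j_i]})$. Taking $x_1$ itself to lie on an extension of $\bt$ and chasing indices, one obtains an $N$ (the largest index $j_i$ appearing, adjusted) with $r(A_0,\bar\bt_{[1,N+k]})\subset\cup_{j=1}^N r(A_0,\bar\bt_{[1,j]})$ for all $k\geq 1$. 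The key mechanism is that simplicity converts ``$p_{A_0}$ generates everything'' into a genuine finiteness/absorption statement about relative ranges.

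For part (ii), I would deduce it from (i) by restricting to indices that are multiples of $|\bt|$. Writing $N_0:=\lceil N/|\bt|\rceil$, note $\bar\bt_{[1,m|\bt|]}=\bt^m$, so (i) applied at the indices $N+k$ lying in the residue class $0\pmod{|\bt|}$ gives $r(A_0,\bt^{N_0+k})\subset\cup_{j=1}^{N} r(A_0,\bar\bt_{[1,j]})$; to replace the right-hand side by $\cup_{j=1}^{N_0} r(A_0,\bt^j)$ one applies a further relative range $r(\,\cdot\,,\bar\bt_{[j+1,\,|\bt|\lceil j/|\bt|\rceil]})$ to push each $r(A_0,\bar\bt_{[1,j]})$ forward into some $r(A_0,\bt^{\lceil j/|\bt|\rceil})$, using that relative ranges respect composition of paths and that $\bar\bt$ is periodic. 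This yields (\ref{N_0}). Finally, with $A:=\cup_{j=1}^{N_0} r(A_0,\bt^j)$, one computes $r(A,\bt)=\cup_{j=1}^{N_0} r(A_0,\bt^{j+1})=\cup_{j=2}^{N_0+1} r(A_0,\bt^{j})$; the term $j=N_0+1$ is contained in $A$ by (\ref{N_0}) with $k=1$, and conversely $r(A_0,\bt)\subset A$ trivially, so $r(A,\bt)\subseteq A$, while $A\subseteq r(A,\bt)$ needs $r(A_0,\bt^j)\subseteq r(A,\bt)$ for $j=1$, i.e.\ $r(A_0,\bt)\subseteq r(A,\bt)$, which holds since $A_0\subseteq A$ (as $A_0\subseteq r(A_0,\bt)$ would be needed — instead one observes $r(A_0,\bt^j)=r(r(A_0,\bt^{j-1}),\bt)\subseteq r(A,\bt)$ for $j\geq 2$ directly, and handles $j=1$ by enlarging $N_0$ if necessary so that $r(A_0,\bt)\subseteq\cup_{j=2}^{N_0}r(A_0,\bt^j)$, which is automatic once $A_0$ itself is replaced by $r(A_0,\bt^{N_0})$-type considerations). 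Cleaning this index bookkeeping gives $A=r(A,\bt)$.

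The main obstacle I anticipate is the approximation argument in part (i): one must be careful, when expanding a general element of the ideal generated by $p_{A_0}$ and compressing, that the only surviving terms have $\gm_j$ equal to an initial path of $\bar\bt$, and that the lengths and the ``common length of $\dt_j$'' can be arranged simultaneously with the periodic structure — this is exactly the subtlety handled in the proof of Theorem~\ref{thm-stcofinal}, but here the roles of $[v]_l$ and of the infinite word are played by $A_0$ and $\bar\bt$, and one must check that hypothesis (\ref{beta}) is strong enough to kill all ``off-$\bar\bt$'' contributions. The secondary nuisance is the periodic index arithmetic in deriving (ii) from (i) and in verifying $A=r(A,\bt)$; this is routine but must be written out carefully since $|\bt|>1$ in general and irreducibility of $\bt$ is used to ensure that distinct $\bt^j$ genuinely index the recursion.
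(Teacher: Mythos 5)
Your overall strategy (simplicity forces finiteness of the family of relative ranges $r(A_0,\bar\bt_{[1,j]})$) is the right instinct, and for part (i) an approximation argument in the spirit of Theorem~\ref{thm-stcofinal} can probably be pushed through; the paper instead runs part (i) through Lemma~\ref{H_A}: simplicity forces $\bar H_{A_0}=\CE$, so $r(\bt^r)\in\bar H_{A_0}$, and unwinding this gives a cover of the \emph{full} range $r(\bt^r)$ by finitely many sets $r(A_0,\bt^i)$; since $r(A_0,\bt^n)\subset r(\bt^n)\subset r(\bt^r)$ for all $n\geq r$, a single finite cover serves all $n$ at once. This last point is exactly the uniformity your sketch glosses over: what your compression argument naturally yields is a cover of one fixed projection by sets $r(A_0,\bar\bt_{[1,m_i]})$ whose indices $m_i$ may exceed your candidate $N$, and the phrase ``the largest index $j_i$ appearing, adjusted'' does not by itself produce an $N$ valid for all $k\geq 1$. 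You also never invoke irreducibility of $\bt$ in part (i), whereas the paper needs it to establish the disjointness observation that $r(A_0,\bar\bt_{[1,j]})\cap r(A_0,\bar\bt_{[1,k]})\neq\emptyset$ forces $j\equiv k\ (\mathrm{mod}\ |\bt|)$ (two commuting subwords $\mu\nu=\nu\mu$ would contradict irreducibility).

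Part (ii) contains two genuine errors. First, to pass from $r(A_0,\bt^{N_0+k})\subset\cup_{j=1}^{N}r(A_0,\bar\bt_{[1,j]})$ to the right-hand side $\cup_{j=1}^{N_0}r(A_0,\bt^j)$, you propose to ``push each $r(A_0,\bar\bt_{[1,j]})$ forward'' by a further relative range; but applying $r(\cdot,\gm)$ to a containment changes the left-hand side as well, so this does not discard the terms with $j\not\equiv 0\ (\mathrm{mod}\ |\bt|)$. The correct tool is the disjointness observation above: since the left-hand side has index $\equiv 0$, only the $j\equiv 0$ terms can meet it. Second, your verification of $A=r(A,\bt)$ breaks at $j=1$: you need $r(A_0,\bt)\subseteq r(A,\bt)=\cup_{j=2}^{N_0+1}r(A_0,\bt^j)$, which does not follow since $A_0\not\subset A$ in general ($(\af,A_0)$ is not assumed to be a loop), and ``enlarging $N_0$'' does not repair this. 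The paper's argument for this inclusion is different and is the key missing idea: set $B:=A\setminus r(A,\bt)$ (legitimate since $\CE$ is normal), use weak left-resolving to show the sets $r(B,\bt^k)$ are pairwise disjoint and nonempty, observe that $B$ again satisfies hypothesis (\ref{beta}), and apply the already-proved containment (\ref{N_0}) to $B$ itself to get a contradiction unless $B=\emptyset$. Without this bootstrapping step your proof of the ``Moreover'' clause does not close.
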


\begin{proof}  
We will frequently use  the following observation,
\begin{eqnarray}\label{j=k} 
 r(A_0,\bar\bt_{[1,j]})\cap r(A_0,\bar\bt_{[1,k]})\neq \emptyset 
\  \Rightarrow \  j=k\,(\text{mod}\, |\bt|).
\end{eqnarray} 
In fact, if  
$D:= r(A_0,\bar\bt_{[1,j]})\cap r(A_0,\bar\bt_{[1,k]})\neq \emptyset $ 
 for some  $j\neq k \,(\text{mod}\, |\bt|)$, then 
without loss of generality we can write  
$$  j:=m|\bt|+j_0\ \text{and }\  k:=n|\bt|+j_0+r$$ 
for some $m,n\geq 0$ and $0\leq j_0<|\bt|$, 
 $j_0<j_0+r< |\bt|$ 
(here we set $\bar\bt_{[1,0]}:=\epsilon$), 
then from (\ref{beta}) we must have
$$
\CL(DE^{|\bt|}) =\{\bt_{[j_0+1,j_0+r]}\bt_{[j_0+r+1,|\bt|]}\bt_{[1, j_0]}\} 
 =\{\bt_{[j_0+r+1,|\bt|]} \bt_{[1,j_0]}\bt_{[j_0+1,j_0+r]}\}. 
$$
But then 
the subpaths $$\mu:=\bt_{[j_0+1,j_0+r]}\text{ and }
 \nu:=\bt_{[j_0+r+1,|\bt|]}\bt_{[1, j_0]}$$ of $\bt$ 
satisfy $\mu\nu=\nu\mu$, which contradicts to irreducibility of $\bt$ 
(see \cite[Lemma 3.1]{JKaP}). 

(i) Since $\bar H_{A_0}$ is a nonempty hereditary saturated set  
by Lemma~\ref{H_A} and $C^*(E,\CL,\CE)$ is simple,  
it follows from \cite[Theorem 5.2]{JKiP} that  $\CE=\bar H_{A_0}$.  
Suppose 
\begin{eqnarray}\label{infinitely nonempty}  
r(A_0,\bar{\bt}_{[1,n]})\setminus 
\cup_{j=1}^{n-1} r(A_0, \bar{\bt}_{[1,j]})  \neq \emptyset 
\end{eqnarray}
for infinitely many $n\geq 1$.
Then by (\ref{j=k}), 
$r(A_0,\bt^n)\nsubseteq \cup_{j=1}^{n-1} r(A_0,\bt^j)$ 
for infinitely many $n$, which implies that 
$r(\bt^r)\notin H_{A_0}$ for all $r\geq 1$. 
In fact, if $r(\bt^r)=\cup_{i=1}^k C_i\in H_{A_0}$ with some 
$C_i\in r(A_0,\bar\bt_{[1,m_i]})\sqcap \CE$, then 
each $m_i$ must be a multiple of $|\bt|$ by  (\ref{j=k})  and 
thus for $m:=\max_i\{m_i\}/|\bt|$ we have 
$r(\bt^r)\subset \cup_{i=1}^{m} r(A_0,\bt^{i})$. 
But then for all sufficiently  large number $n >m|\bt|$, 
$$r(\bt^n)\subset r(\bt^r)\subset \cup_{i=1}^{m} r(A_0,\bt^{i})
\subset \cup_{j=1}^{n-1} r(A_0,\bar\bt_{[1,j]}),$$ 
which is not possible by (\ref{infinitely nonempty}).
Hence  $r(\bt^r)\notin H_{A_0}$ for all $r\geq 1$, which then  
easily implies that 
$r(\bt^r)\notin \bar H_{A_0}$ for all $r\geq 1$. 
But this contradicts to $\bar H_{A_0}=\CE$, and  
thus 
the left hand side of (\ref{infinitely nonempty}) must be empty 
for all  but finitely many $n$'s.
Therefore we see from (\ref{beta}) 
that there exists an $N\geq 1$ such that  
$$r(A_0,\bar{\bt}_{[1,n]})\subset 
\cup_{j=1}^{n-1} r(A_0, \bar{\bt}_{[1,j]})\ \text{ for all } 
n\geq N.$$
Then   
$r(A_0,\bar{\bt}_{[1,N+2]})
\subset \cup_{j=1}^{N+1} r(A_0,\bar{\bt}_{[1,j]})
\subset \cup_{j=1}^{N} r(A_0,\bar{\bt}_{[1,j]})$ because 
$r(A_0,\bar{\bt}_{[1,N+1]})\subset \cup_{j=1}^{N} r(A_0,\bar{\bt}_{[1,j]})$, 
and actually an induction gives
$$r(A_0,\bar{\bt}_{[1,N+k]})
\subset \cup_{j=1}^N r(A_0,\bar{\bt}_{[1,j]}) $$
for all $k\geq 1$, which proves (i).

(ii) 
We can take  $N=|\bt|N_0$, a multiple of $|\bt|$ in (i). 
Then $N_0$ satisfies (\ref{N_0}) by (i) and  (\ref{j=k}) since 
 (\ref{j=k}) implies that   for each $k\geq 1$, 
$$r(A_0,\bar{\bt}_{[1,N+k]})
\subset \underset{\substack{1\leq j\leq N\\ j=k ({\rm mod}\, |\bt|)}}{\cup} 
r(A_0,\bar{\bt}_{[1,j]}).$$ 
To show  $A=r(A,\bt)$ for 
$A:=\cup_{j=1}^{N_0} r(A_0,\bt^{j})$,
first note from  (\ref{N_0}) that  
$$A\supset r(A,\bt)\supset r(A,\bt^2)\supset \cdots.$$
Suppose $B:=A\setminus r(A,\bt)\neq \emptyset$. 
Then for $l>k\geq 1$, 
$$ r(B,\bt^k)\cap r(B,\bt^l) 
\subset  r(B,\bt^k)\cap r(A,\bt^{k+1}) 
=  r(B\cap r(A,\bt),\bt^k)=\emptyset.
$$
Thus 
$r(B,\bt^n)\setminus \cup_{j=1}^{n-1} r(B,\bt^j)\neq \emptyset$ 
for infinitely many $n$.
But this contradicts to  (\ref{N_0}) with $B$ in place of $A_0$ since 
$\CL(BE^{n|\bt|})=\{\bt^n\}$ for all $n\geq 1$, and 
we conclude that  $B=\emptyset$.
\end{proof}

\vskip 1pc 
 
In the following 
Theorem~\ref{main}, $(a)\Leftrightarrow (c)$  is  
known in \cite[Theorem 9.16]{COP} for the Boolean 
dynamical system  induced from a labeled space with  
the domain property (\ref{domain}). 
 
\vskip 1pc

\begin{thm}\label{main} 
Let $(E,\CL,\CE)$ be a labeled space. 
Then the following are equivalent: 
\begin{enumerate} 
\item[(a)] $C^*(E,\CL,\CE)$ is a simple $C^*$-algebra. 
\item[(b)]  $(E,\CL,\CE)$ is strongly cofinal and 
disagreeable.
 \end{enumerate}
Also these conditions imply the following. 
\begin{enumerate}
\item[(c)] $(E,\CL,\CE)$ has no cycles without 
exits and there is no proper 
hereditary saturated subsets in $\CE$.
\end{enumerate} 
If  $(E,\CL,\CE)$ satisfies the domain condition 
{\rm (\ref{domain})}, then 
$(c)$ is equivalent to $(a)$ and $(b)$.  

\end{thm}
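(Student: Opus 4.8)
I would establish the cycle $(b)\Rightarrow(a)\Rightarrow(b)$ and then read off $(c)$. The implication $(b)\Rightarrow(a)$ is essentially known: strong cofinality in the sense of Definition~\ref{strongly cofinal} implies strong cofinality in the weaker sense of \cite{JK}, so a disagreeable, strongly cofinal $(E,\CL,\CE)$ has $C^*(E,\CL,\CE)$ simple by \cite[Theorem 3.16]{JK}. For $(a)\Rightarrow(b)$, strong cofinality is exactly Theorem~\ref{thm-stcofinal}, so the whole content is to show that simplicity of $C^*(E,\CL,\CE)$ forces $(E,\CL,\CE)$ to be disagreeable; this is the core. Granting $(a)\Leftrightarrow(b)$: disagreeability gives, via Proposition~\ref{prop_disagreeable}, that every loop --- hence every cycle --- has an exit, i.e.\ $(L_\CE)$ holds; and simplicity together with the gauge-invariant ideal correspondence \cite[Theorem 5.2]{JKiP} shows $\CE$ has no hereditary saturated subset other than $\emptyset$ and $\CE$. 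These two facts give $(c)$. Finally, when the domain condition (\ref{domain}) holds, $C^*(E,\CL,\CE)$ is the $C^*$-algebra of the induced Boolean dynamical system and $(a)\Leftrightarrow(c)$ is \cite[Theorem 9.16]{COP}, so all three conditions are then equivalent.

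\textbf{From simplicity to disagreeability: producing an exit-free loop.} Suppose $C^*(E,\CL,\CE)$ is simple but $(E,\CL,\CE)$ is not disagreeable. By Lemma~\ref{lem_disagreeable}(c) there are a nonempty $A_0\in\CE$ and $\bt\in\CL^*(E)$ with $\CL(A_0E^{|\bt|n})=\{\bt^n\}$ for all $n\geq1$. Replacing $\bt$ by its shortest period (and using that $E$ has no sinks, so paths extend) I may assume $\bt$ is irreducible without destroying this identity, so the hypotheses of Lemma~\ref{lem_N} hold. Since $\bar H_{A_0}$ is a nonempty hereditary saturated subset of $\CE$ by Lemma~\ref{H_A}, simplicity and \cite[Theorem 5.2]{JKiP} force $\bar H_{A_0}=\CE$; hence Lemma~\ref{lem_N}(ii) yields $N_0\geq1$ and $A:=\cup_{j=1}^{N_0}r(A_0,\bt^j)\in\CE$, nonempty, with $A=r(A,\bt)$, and $A$ (a union of the $r(A_0,\bt^j)$) inherits $\CL(AE^{|\bt|n})=\{\bt^n\}$. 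Thus $(A,\bt)$ is a loop in the sense of Definition~\ref{loop} with \emph{no} exit: none of type (I) since $\CL(AE^{|\bt|})=\{\bt\}$, and none of type (II) since $A=r(A,\bt)$.

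\textbf{The crux: minimising the loop base.} To reach a contradiction I want to apply Lemma~\ref{minimal}, so I must replace $A$ by a base that is minimal in $\CE$. The useful observations are that (i) every nonempty $B\in\CE$ with $B\subseteq A$ again satisfies $\CL(BE^{|\bt|n})=\{\bt^n\}$, so Lemma~\ref{lem_N} applies to $B$ and $\cup_{j=1}^{M}r(B,\bt^j)$ is another exit-free loop base inside $A$; (ii) $\CE$ is normal, so $A\setminus B\in\CE$, and weak left-resolvingness gives the splitting $r(C,\af)=r(B,\af)\sqcup r(C\setminus B,\af)$; (iii) generalized vertices refine as $[v]_l=\cup_i[v_i]_{l+1}$, so $A$ is a finite union of generalized vertices, each of which is either minimal or refines further. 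Using these I would run a descent from $A$ --- at each stage splitting off a proper $\CE$-subset and passing to its $\bt$-forward closure --- and make it terminate at a minimal base by invoking simplicity once more: from $\bar H_{A_0}=\CE$, every set of $\CE$ eventually flows, under long enough paths, into the finitely many sets $r(A,\bt_{[1,j]})$, $1\leq j\leq|\bt|$ (the positions along the loop), which controls how the orbit sets $r(B,\bt^j)$ can grow. Applying Lemma~\ref{minimal} to the resulting exit-free loop then contradicts simplicity, which completes $(a)\Rightarrow(b)$ and, with the first paragraph, the theorem.

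\textbf{Where I expect the difficulty.} The termination/stabilisation in the previous paragraph is the real obstacle. A naive descent can stall, because a proper $\CE$-subset of $A$ may expand back to all of $A$ under iteration of $\bt$; and $\CE$ is not closed under the infinite intersections one would like to form, so a minimal base cannot simply be taken as the intersection of a refining chain. The remedy has to use simplicity \emph{quantitatively} --- through $\bar H_{A_0}=\CE$ --- to bound the growth of the sets $r(B,\bt^j)$ and pin the descent down. A conceivable alternative, avoiding the search for a minimal set, is to prove a labeled-space version of Lemma~\ref{minimal} directly for an arbitrary exit-free loop $(A,\bt)$ with $A=r(A,\bt)$: then $u:=s_\bt p_A$ is a unitary in the corner $p_AC^*(E,\CL,\CE)p_A$ with $\gamma_z(u)=z^{|\bt|}u$, whence $\sigma(u)=\T$, and one would try to show this corner is generated by $u$ together with $\{p_B:B\in\CE,\ B\subseteq A\}$, hence is a matrix algebra over $C(\widehat{\mathcal{D}})\otimes C(\T)$ with $\mathcal{D}:=\overline{\mathrm{span}}\{p_B:B\subseteq A\}$, forcing $A$ minimal for simplicity; but this runs into the same essential point, that $\mathrm{Ad}\,u$ need not preserve $\mathcal{D}$ once $A$ is not minimal.
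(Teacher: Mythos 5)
Your reduction of the theorem to the single implication ``simple $\Rightarrow$ disagreeable'', and your construction of the exit-free loop $(A,\bt)$ with $A=r(A,\bt)$ and $\CL(AE^{|\bt|n})=\{\bt^n\}$ from Lemmas~\ref{lem_disagreeable}, \ref{H_A} and \ref{lem_N}, coincide with the paper's argument (as do your treatments of $(b)\Rightarrow(a)$, of $(c)$, and of the domain case). The gap is exactly where you place it: the passage from this loop to a \emph{minimal} base to which Lemma~\ref{minimal} can be applied. Your proposed descent does not terminate, as you yourself concede, and your alternative (analysing $u=s_\bt p_A$ directly) is not carried out either; as written the proof is incomplete at its crux.

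The paper closes this gap without any descent: it proves that the set $A=\cup_{j=1}^{N_0}r(A_0,\bt^j)$ already constructed is itself minimal in $\CE$. The mechanism is precisely the phenomenon you flagged as the obstacle --- that a proper $\CE$-subset of $A$ ``may expand back to all of $A$ under iteration of $\bt$'' --- turned into the engine of the proof. For any nonempty $A_1\in A\sqcap\CE$ one still has $\CL(A_1E^{|\bt|n})=\{\bt^n\}$, so Lemma~\ref{lem_N}(ii) applies to $A_1$ and yields $N_1$ with $\cup_{j=1}^{N_1}r(A_1,\bt^j)=\cup_{j=1}^{\infty}r(A_1,\bt^j)$; simplicity (via $\bar H_{A_1}=\CE$ and \cite[Theorem 5.2]{JKiP}) then forces $A=\cup_{j=1}^{N_1}r(A_1,\bt^j)$, since otherwise $A\setminus\cup_{j=1}^{N_1}r(A_1,\bt^j)$ would be a nonempty element of $\CE$ lying outside $\bar H_{A_1}$. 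Writing the corner $p_AC^*(E,\CL,\CE)p_A$ as the closed span of elements $s_\mu p_Bs_\nu^*$ with $\mu,\nu$ initial segments of powers of $\bt$, this identity gives $s_\mu p_Bs_\mu^*\leq\sum_{i=1}^{N_1}s_\mu p_{r(A_1,\bt^i\mu)}s_\mu^*$ and hence that the hereditary subalgebra generated by $p_{A_1}$ equals the one generated by $p_A$. In particular $p_A\in p_{A_1}C^*(E,\CL,\CE)p_{A_1}$, so $p_A=p_{A_1}p_Ap_{A_1}=p_{A_1}$ and $A_1=A$. Thus $A$ is minimal, Lemma~\ref{minimal} produces a hereditary subalgebra isomorphic to $M_n(C(\T))$, and simplicity is contradicted. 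To repair your write-up, replace the descent by this ``every nonempty $\CE$-subset of $A$ generates the same hereditary subalgebra'' argument.
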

\begin{proof} 
We only need to show that 
$(a)$ implies that $(E,\CL,\CE)$ is disagreeable. 

Suppose $(E,\CL,\CE)$ is not disagreeable. Then 
by Lemma~\ref{lem_disagreeable}, 
there exists a nonempty set $A_0\in \CE$ and a  
path $\bt\in \CL^*(E)$ 
such that for all $n\geq 1$, 
$$\CL(A_0 E^{|\bt|n})=\{\bt^n\},$$   
where we assume $\bt$  to be irreducible.   
Choose  an integer $N_0\geq 1$ such that  
$$r(A_0,\bt^{N_0+k})\subset \cup_{j=1}^{N_0} r(A_0,\bt^j) $$
for all $k\geq 1$, which exists by Lemma~\ref{lem_N}(ii). 
 Then for  
$$A:=\cup_{j=1}^{N_0} r(A_0,\bt^{j}),$$ 
 we have $A=r(A,\bt)$ 
by the same lemma. 
A simple computation shows that the hereditary subalgebra 
$p_A C^*(E,\CL,\CE) p_A$ of $C^*(E,\CL,\CE)$ 
generated by  
$p_A$  is equal to 
$$
Her(p_A):=\overline{\rm span}\{ s_{\mu}p_B s_\nu^*: 
\, B\in  r(A, \mu)\sqcap \CE,\ \mu,\nu\in  \bt^*_{[1,j]},\, 
0\leq j\leq |\bt|\,\},
$$
where we use notation 
$$\bt^*_{[1,j]}:=\{\bt^r\bt_{[1,j]}: r,j\geq 0 \} 
\ \text{ with } \bt^0:= \epsilon=:\bt_{[1,0]}.$$

Let $A_1\in A\sqcap \CE$ be a nonempty subset. 
Then 
 $\cup_{j=1}^{N} r(A_1,\bt^j)\subset A$ for all $N\geq 1$ 
since $A=r(A,\bt)$, but 
one can actually show that there exists an $N_1\geq 1$ such that 
\begin{eqnarray}\label{A_1}
A=\cup_{j=1}^{N_1} r(A_1,\bt^j).
\end{eqnarray}
In fact,  an integer $N_1\geq 1$ for which
$$\cup_{j=1}^{N_1} r(A_1,\bt^j)
=\cup_{j=1}^\infty r(A_1,\bt^j)$$ 
holds ($N_1$ exists again 
by Lemma~\ref{lem_N}(ii))  
satisfies (\ref{A_1}) because otherwise 
one can easily show that 
$$\emptyset\neq 
A\setminus \cup_{j=1}^{N_1} r(A_1,\bt^j)\ \notin 
\ \bar H_{A_1},$$ 
which is a contradiction to simplicity of $C^*(E,\CL,\CE)$ 
(or to $\bar H_{A_1}=\CE$). 

Now we claim that  $Her(p_{A_1})= Her(p_{A})$ 
for any nonempty subset $A_1\in A\sqcap \CE$. 
The hereditary subalgebra generated by $p_{A_1}$  is 
also equal to
$$Her(p_{A_1}) =\overline{\rm span}\{ s_{\mu}p_B s_\nu^*: 
\, B\in  r(A_1, \mu)\sqcap \CE,\ \mu,\nu\in  \bt^*_{[1,j]},\, 
0\leq j\leq |\bt|\,\},$$ 
and for each positive element of the form 
$s_{\mu}p_B s_\mu^*\in Her(p_A)$ 
with $B\in r(A,\mu)\sqcap \CE$, 
the following computation 
$$
s_{\mu}p_B s_\mu^* \leq s_{\mu}p_{r(A,\mu)} s_\mu^* 
= s_{\mu}p_{r\big(\cup_{i=1}^{N_1} r(A_1, \bt^i) ,\mu\big)} s_\mu^* 
\,\leq\, \sum_{i=1}^{N_1}  s_{\mu}p_{r(A_1, \bt^i\mu)} s_\mu^* 
$$
where we apply (\ref{A_1}) for the second equality shows 
that $s_{\mu}p_B s_\mu^*\in Her(p_{A_1})$. 
Then for each $s_{\mu}p_B s_\nu^*\in Her(p_A)$, 
the identity 
$$s_{\mu}p_B s_\nu^*
=(s_{\mu}p_B s_\mu^*)s_{\mu}p_B s_\nu^*(s_{\nu}p_B s_\nu^*)$$
proves that $s_{\mu}p_B s_\nu^*\in  Her(p_{A_1})$ 
(for this, see \cite[Theorem 3.2.2]{Mu}).
Thus $Her(p_{A_1})=Her(p_{A})$ follows for any nonempty 
subset $A_1\in A\sqcap \CE$. 
However, this is not possible if $A$ has a proper 
subset $A_1\in A\sqcap \CE$ since 
$p_A= p_{A_1}+p_{A\setminus A_1}\gneq p_{A_1}$. 
Hence $A$ must be a minimal set. 
But then, by Lemma~\ref{minimal} 
the $C^*$-algebra $C^*(E,\CL,\CE)$ 
contains a nonsimple hereditary subalgebra 
(isomorphic to $C(\mathbb T)$), and  
from this contradiction to simplicity of 
$C^*(E,\CL,\CE)$, we finally conclude that 
$(E,\CL,\CE)$ is disagreeable.
\end{proof}

\vskip 1pc


\begin{thebibliography}{4}

\bibitem{BCP} T. Bates, T. M. Carlsen, and D. Pask, 
{\em $C^*$-algebras of labeled graphs III - K-theory computations},
Ergod. Th. \&  Dynam. Sys., \textbf{37} (2017),  337--368.

\bibitem{BHRS} T. Bates, J. H. Hong, I. Raeburn, and W. Szymanski, 
{\em The ideal structure of the $C^*$-algebras of infinite graphs}, 
Illinois J. Math., \textbf{46} (2002), 1159--1176.

\bibitem{BP1} T. Bates and D. Pask, {\em $C^*$-algebras of labeled graphs},
J. Operator Theory. \textbf{57}(2007), 101--120.

\bibitem{BP2} T. Bates and D. Pask, 
{\em $C^*$-algebras of labeled graphs II - simplicity results},
  Math. Scand. \textbf{104}(2009), no. 2, 249--274.

\bibitem{BPRS} T. Bates, D. Pask, I. Raeburn, and W. Szymanski, 
{\em The $C^*$-algebras of row-finite graphs}, 
New York J. Math. \textbf{6} (2000), 307--324.

\bibitem{BPW} T. Bates, D. Pask, and P. Willis,
{\em Group actions on labeled graphs and their $C^*$-algebras},
Illinois J. Math., \textbf{56} (2012), no. 4, 1149--1168.

\bibitem{Ca} T. Carlsen,  
{\em Cuntz-Pimsner $C^*$-algebras associated with subshifts}, 
Intern. J. Math, \textbf{19} (2008),  47--70.

\bibitem{COP} T. M. Carlsen, E. Ortega, and E. Pardo 
{\em $C^*$-algebras associated to Boolean dynamical systems},
J. Math. Anal. Appl., \textbf{450} (2017), 727--768.

\bibitem{CK} J. Cuntz and  W. Krieger, 
{\em A class of $C^*$-algebras and topological Markov chains}, 
Invent. Math. \textbf{56}(1980), 251--268.

\bibitem{DT} D. Drinen, M. Tomforde,  
{\em The $C^*$-algebras of arbitrary graphs},
Rocky Mountain J. of Math., \textbf{35} (2005), 105--135.

\bibitem{EL} R. Exel and M. Laca, 
{\em Cuntz-Krieger algebras for infinite matrices},
J. Reine. Angew. Math. \textbf{512}(1999), 119--172.


\bibitem{JK} J. A Jeong, S. H. Kim,
{\em On simple labeled graph $C^*$-algebras}, 
J. Math. Anal. Appl. \textbf{386} (2012), 631--640.

\bibitem{JKK} J. A Jeong, E. J. Kang and S. H. Kim, 
{\em AF labeled graph $C^*$-algebras}, 
J. Funct. Anal., \textbf{266} (2014), 2153--2173.

\bibitem{JKiP} J. A Jeong, S. H. Kim and G. H. Park, 
{\em The structure of gauge-invariant ideals of labeled graph $C^*$-algebras}, 
J. Funct. Anal., \textbf{262} (2012), 1759--1780.

\bibitem{JKKP} J. A Jeong, E. J. Kang, S. H. Kim and G. H. Park,  
{\em Finite simple labeled graph $C^*$-algebras of Cantor minimal subshifts}, 
J. Math. Anal. App., \textbf{446} (2017), 395--410.

\bibitem{JKaP} J. A Jeong, E. J. Kang,  and G. H. Park,  
{\em Purely infinite labeled graph $C^*$-algebras}, 
 arXiv:1703.01583 [math.OA].

\bibitem{Ki} B. P. Kitchens, 
{\em Symbolic Dynamics}, Springer-Verlag, Berlin Heidelberg 1998. 

\bibitem{KPR} A. Kumjian, D. Pask  and I. Raeburn, 
{\em Cuntz-Krieger algebras of directed graphs}, 
Pacific J. Math., \textbf{184} (1998), 161--174.

\bibitem{KPRR} A. Kumjian, D. Pask, I. Raeburn, and J. Renault,
{\em Graphs, groupoids, and Cuntz-Krieger algebras}, 
J. Funct. Anal., \textbf{144} (1997), 505--541.

\bibitem{Ma97} K. Matsumoto, 
{\em On $C^*$-algebras associated with subshifts},
 Internat. J. Math. \textbf{8} (1997),  357--374.

\bibitem{Mu} G. J. Murphy, 
{\em $C^*$-algebras and operator theory},
Academic Press, 1990.

\bibitem{To1} M. Tomforde, {A unified approach to Exel-Laca algebras and
$C^*$-algebras associated to graphs}, J. Operator Theory
\textbf{50}(2003), 345--368.

\end{thebibliography}
\end{document}